\theoremstyle{plain}
\newtheorem{thm}{Theorem}[section]
\newtheorem{theorem}[thm]{Theorem}
\newtheorem{proposition}[thm]{Proposition}
\theoremstyle{definition}
\newtheorem{remark}[thm]{Remark}
\newtheorem{question}[thm]{Question}
\newtheorem{setup}[thm]{Set-up}
\numberwithin{equation}{section}
\newcommand{\C}{{\mathbb C}}
\renewcommand{\P}{{\mathbb P}}
\newcommand{\Q}{{\mathbb Q}}
\newcommand{\R}{{\mathbb R}}
\newcommand{\Z}{{\mathbb Z}}
\title [On a question of Professor Igor Dolgachev]{A few explicit examples of complex dynamics of inertia groups on surfaces - a question of Professor Igor Dolgachev}
\author{Keiji Oguiso}
\address{Mathematical Sciences, the University of Tokyo, Meguro Komaba 3-8-1, Tokyo, Japan and Korea Institute for Advanced Study, Hoegiro 87, Seoul, 
133-722, Korea}
\email{oguiso@ms.u-tokyo.ac.jp}
\thanks{The author is supported by JSPS Grant-in-Aid (S) No 25220701, JSPS Grant-in-Aid (S) 15H05738, JSPS Grant-in-Aid (B) 15H03611, and by KIAS Scholar Program.}
 \dedicatory{Dedicated to Professor JongHae Keum on the occasion of his
sixtieth birthday.}
\begin{document}

\maketitle

\begin{abstract}
We give a few explicit examples which answer an open minded question of Professor Igor Dolgachev on complex dynamics of the inertia group of a smooth rational curve on a projective K3 surface and its variants for a rational surface and a non-projective K3 surface. 
\end{abstract}

\section{Introduction}

In this note we work over $\C$. Our main results are Theorems \ref{thm1}, \ref{thm2}, \ref{cor1}, \ref{thm3}.

Let $V$ be a smooth projective variety and $C$ a subvariety of $V$. We define the subgroups of ${\rm Aut}\, (V)$ called the {\it decomposition group} of $C$ and the {\it inertia group} of $C$, respectively by
$${\rm Dec}\,(C) := \{f \in {\rm Aut}\, (V)\, |\, f(C) = C\}\,\, ,\,\, {\rm Ine}\,(C) := \{f \in {\rm Dec}\, (V)\, |\, f|_{C} = id_C\}\,\, .$$ 
The aim of this note is to add some answers to the following open minded question and its variants. The question has been asked by Professor Igor Dolgachev in his openning talk at the conference "Algebraic Geometry in honnor of Professor JongHae Keum's 60-th birthday":

\begin{question}\label{ques1}
It is challenging to find (further) pairs $(S, C)$ of a projective K3 surface and a smooth rational curve $C \subset S$ such that ${\rm Ine}\,(C)$ contains an element with positive topological entropy. 
\end{question}

Though it is not the definition, the {\it topological entropy} $h_{{\rm top}}(f)$ of an automorphism $f \in {\rm Aut}\, (V)$ is given by:
$$h_{{\rm top}}(f) = \log \rho(f) \ge 0\,\, .$$
Here $\rho(f)$ is the spectral radius of the action of $f$ on the group of the numerical equivalence classes of algebraic cocycles $\oplus_{p=0}^{\dim\, V} N^p(V)$ when $V$ is projective. More generally, $\rho (f)$ is the spectral radius of the action of $f$ on $\oplus_{p=0}^{\dim\, V}H^{p, p}(V)$, and these two quantities coincides if $V$ is projective. Originally, $h_{{\rm top}}(f)$ is defined as a fundamental measure "how fast forward orbits $\{f^n(x) | n \ge 1\}$, $\{f^n(y) | n \ge 1\}$ of two general points $x, y \in V$ spread out" and therefore it is considered as a fundamental measure of the complexity of complex dynamics given by the forward iterations of $f$ (see \cite{Gr87}, \cite{KH95}, \cite{DS05}, \cite{Tr15} for more details). We have $h_{{\rm top}}(f) = 0$ if it is tame and $h_{{\rm top}}(f) > 0$ if it is wild. So, Question \ref{ques1} asks two extreme aspects; complex dynamics of $f$ on $S \setminus C$ have to be rather wild, while it is trivial on $C$. In a deeper level, the equidistribution of {\it isolated} fixed sets $Q_n$ of $f^n$ under $n \to \infty$ for any surface automorphism $f$ of positive entropy is discovered in the survey \cite[Theorem 5.6]{DS16}. The difficulty in the proof is caused by fixed curves. Pairs $(S, C)$ in Question \ref{ques1} provide explicit examples of equidistribution theorem with fixed curves. This is also an interesting dynamical feature of Question \ref{ques1}. We remark that the higher dimensional case is an open problem (see eg. \cite{DS16}). It is also natural to study the action of $f$ on the normal bundle $N_{S|C}$. In our construction, it is trivial in Theorems \ref{thm1} \ref{thm2} \ref{cor1} (projective cases) and it is the multiplication by a Salem number in Theorem \ref{thm3} (non-projective case). 

Following \cite{DZ01}, we call a smooth rational surface $W$ a {\it Coble surface} if $|-K_W| = \emptyset$ and $|-2K_W| \not= \emptyset$. Let $B \subset \P^2$ be a sextic plane curve with $10$ nodes, $T$ the blow up of $\P^2$ at the $10$ nodes and $C \subset T$ the proper transform of $B$. Then $\P^1 \simeq C \in |-2K_T|$ and $T$ is a Coble surface called {\it classical} (\cite{DZ01}). $T$ admits a finite double cover $\pi : S \to T$ branched along $C$. Then $S$ is a projective K3 surface, which we call {\it of classical Coble type}. We denote the ramification curve $(\pi^*C)_{\rm red} \subset S$ by the same letter $C$. In his talk, Professor Igor Dolgachev, among other things, showed that the pair $(S, C)$ of a K3 surface $S$ of classical Coble type and its ramification curve $C$ gives an affirmative answer to Question \ref{ques1}. He shows first that $(T, C)$ satisfies a similar property and then lifts to $(S, C)$. 

The aim of this note is to remark the following theorems (Theorems \ref{thm1}, \ref{thm2}, \ref{cor1}, \ref{thm3}): 

\begin{theorem}\label{thm1}
Question \ref{ques1} is affirmative for a smooth Kummer surface ${\rm Km}\, (E\times F)$ associated to the product abelian surface of elliptic curves $E$ and $F$, i.e., there is a smooth rational curve $C \subset {\rm Km}\, (E\times F)$ such that ${\rm Ine}\, (C)$ has an element of positive topological entropy.  
If in addition $E$ and $F$ are not isogenous, then there is a smooth rational curve $C \subset {\rm Km}\, (E\times F)$ satisfying the following two properties:

\begin{enumerate}
\item $[{\rm Aut}\,({\rm Km}\, (E\times F)) : {\rm Dec}\,(C)] < \infty$ and $[{\rm Dec}\,(C) : {\rm Ine}\, (C)] = \infty$ and 
\item There is $f \in {\rm Ine}\, (C)$ with $h_{{\rm top}}(f) > 0$. 
\end{enumerate}
\end{theorem}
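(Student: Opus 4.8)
The plan is to extract the curve $C$ from the ``double Kummer configuration'' of smooth rational curves on $X:={\rm Km}(E\times F)$ and to produce the required automorphisms by passing to a finite-index subgroup of ${\rm Aut}(X)$. Write $A=E\times F$, let $\iota:a\mapsto -a$, so that $X$ is the minimal resolution of $A/\langle\iota\rangle$ with its $16$ exceptional $(-2)$-curves $N_{x,y}$, $(x,y)\in E[2]\times F[2]$; the two projections of $A$ give elliptic fibrations $p:X\to E/{\pm}\simeq\P^1$ and $q:X\to F/{\pm}\simeq\P^1$, and $X$ contains the $8$ smooth rational curves $\Theta_x$ ($x\in E[2]$, the proper transform of $\overline{\{x\}\times F}$, a multiplicity-$2$ fibre component of $p$) and $\Gamma_y$ ($y\in F[2]$, the proper transform of $\overline{E\times\{y\}}$, a section of $p$). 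First I would record the anti-symplectic involution $\sigma$ of $X$ induced by $(x,y)\mapsto(x,-y)$ (it commutes with $\iota$, so it descends to $A/\langle\iota\rangle$ and lifts to $X$), and check by a local computation at each node that
$${\rm Fix}(\sigma)=\bigsqcup_{x\in E[2]}\Theta_x\ \sqcup\ \bigsqcup_{y\in F[2]}\Gamma_y$$
is a disjoint union of eight smooth rational curves, while $\sigma$ acts on each $N_{x,y}$ as an involution whose two fixed points are $N_{x,y}\cap\Theta_x$ and $N_{x,y}\cap\Gamma_y$ (so $X\to X/\sigma$ is a double cover of a rational surface branched along these eight curves, in the spirit of Dolgachev's Coble example).

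The key point is that $\sigma$ is \emph{central} in ${\rm Aut}(X)$, and this is where the non-isogeny hypothesis enters. When $E$ and $F$ are not isogenous, ${\rm Hom}(E,F)=0$, so $\rho(X)=18$ and ${\rm rank}\,T_X=4$; the $\sigma$-invariant part of $T_X$ is a Hodge substructure of $T_X$ with no $(2,0)$-part, hence algebraic, hence contained in ${\rm NS}(X)\cap T_X=0$, so ${\rm tr}(\sigma^*|T_X)=-4$. On the other hand ${\rm Fix}(\sigma)$ has topological Euler number $16$, so Lefschetz gives ${\rm tr}(\sigma^*|H^2(X))=14$, whence ${\rm tr}(\sigma^*|{\rm NS}(X))=18={\rm rank}\,{\rm NS}(X)$, i.e.\ $\sigma$ acts trivially on ${\rm NS}(X)$. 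A non-symplectic involution acting trivially on ${\rm NS}(X)$ is unique: two of them would differ by a symplectic automorphism acting trivially on ${\rm NS}(X)$, hence trivially on $T_X$ as well, hence trivially on $H^2(X,\Z)$, hence equal to the identity by the Torelli theorem. Therefore every element of ${\rm Aut}(X)$ centralises $\sigma$, so ${\rm Aut}(X)$ preserves the eight-curve set $\{\Theta_x,\Gamma_y\}$; moreover, since a K3 surface admits only finitely many Kummer structures, a finite-index subgroup of ${\rm Aut}(X)$ also preserves the set $\{N_{x,y}\}$.

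For the first (unconditional) assertion I would take $C=\Theta_0$ for the origin of $E[2]$. Let $G\le{\rm Aut}(X)$ be a finite-index subgroup fixing $\Theta_0$ as a set and preserving $\{N_{x,y}\}$. Each $g\in G$ carries $\Theta_0\cap N_{0,y}$ to $\Theta_0\cap g(N_{0,y})$, and $g(N_{0,y})$ is one of the curves $N_{x,y}$ meeting $\Theta_0$, i.e.\ one of $N_{0,y'}$; hence $g$ permutes the four points $\Theta_0\cap N_{0,y}$ ($y\in F[2]$) on $\Theta_0\simeq\P^1$. Since an automorphism of $\P^1$ fixing four points is the identity, the restriction map $G\to{\rm Aut}(\Theta_0)={\rm PGL}_2(\C)$ has finite image, so its kernel $G'$ is a finite-index subgroup of ${\rm Aut}(X)$ contained in ${\rm Ine}(\Theta_0)$. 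It is classical that ${\rm Aut}({\rm Km}(E\times F))$ contains an automorphism $h$ of positive topological entropy (the group is infinite and non-elementary, as $X$ carries many elliptic fibrations); a suitable power $f:=h^N$ then lies in $G'\subseteq{\rm Ine}(\Theta_0)$ and satisfies $h_{{\rm top}}(f)=N\,h_{{\rm top}}(h)>0$. (Taking $N$ also divisible by the order of the root of unity by which $h$ acts on $H^{2,0}(X)$ makes the action on $H^{2,0}(X)$, hence the action on the normal bundle along $\Theta_0$, trivial, as asserted in the introduction.) This proves the first statement with $C=\Theta_0$.

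For the refined assertion (for non-isogenous $E,F$, using the centrality of $\sigma$ established above) I would take $C=N_{00}$. Condition (1): a finite-index subgroup of ${\rm Aut}(X)$ permutes the sixteen curves $N_{x,y}$, so $[{\rm Aut}(X):{\rm Dec}(N_{00})]<\infty$. The only curves of the configuration $\{\Theta_x,\Gamma_y,N_{x,y}\}$ meeting $N_{00}$ are $\Theta_0$ and $\Gamma_0$, so ${\rm Dec}(N_{00})$ permutes the two points $N_{00}\cap\Theta_0$, $N_{00}\cap\Gamma_0$ of $N_{00}\simeq\P^1$, and its image in ${\rm Aut}(N_{00})={\rm PGL}_2(\C)$ lies in the stabiliser of a two-point set, which is virtually abelian; hence ${\rm Ine}(N_{00})$, the kernel of this restriction homomorphism, contains the derived subgroup of a finite-index subgroup of ${\rm Dec}(N_{00})$. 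As ${\rm Dec}(N_{00})$ has finite index in the non-elementary group ${\rm Aut}(X)$, this derived subgroup is still non-elementary and therefore contains an element $f$ of positive entropy, which gives condition (2). The remaining point, $[{\rm Dec}(N_{00}):{\rm Ine}(N_{00})]=\infty$ — equivalently, that the restriction ${\rm Dec}(N_{00})\to{\rm PGL}_2(\C)$ has infinite image — is the step I expect to be the main obstacle: one must exhibit infinitely many automorphisms of $X$ fixing $N_{00}$ and inducing pairwise distinct M\"obius transformations fixing $N_{00}\cap\Theta_0$ and $N_{00}\cap\Gamma_0$. I would try to do this either explicitly — for instance from a second elliptic fibration on $X$ with positive Mordell--Weil rank having $N_{00}$ as a fibre component, whose infinite-order translations acting trivially on the component group at that fibre restrict non-trivially to $N_{00}$ — or by reading it off from the known description of ${\rm Aut}({\rm Km}(E\times F))$ for non-isogenous $E,F$.
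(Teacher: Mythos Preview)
There is a genuine error that breaks both parts of your argument: the assertion that ``since a K3 surface admits only finitely many Kummer structures, a finite-index subgroup of ${\rm Aut}(X)$ also preserves the set $\{N_{x,y}\}$'' is false. Finiteness of Kummer structures concerns isomorphism classes of abelian surfaces $A$ with ${\rm Km}(A)\cong X$; it says nothing about ${\rm Aut}(X)$-orbits of sixteen-curve Nikulin configurations, of which there are infinitely many on $X$. Concretely, several of the $N_{x,y}$ occur as sections of elliptic pencils on $X$ with positive Mordell--Weil rank, and an infinite-order translation $t$ carries such a section to infinitely many pairwise distinct $(-2)$-curves, so no power of $t$ can preserve the finite set $\{N_{x,y}\}$. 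This already invalidates your route to $[{\rm Aut}(X):{\rm Dec}(N_{00})]<\infty$ in the refined part, on top of the gap you acknowledge for $[{\rm Dec}(N_{00}):{\rm Ine}(N_{00})]=\infty$. Worse, your argument for the ``unconditional'' part would, if it held, force ${\rm Ine}(\Theta_0)$ to have \emph{finite index} in ${\rm Aut}(X)$, hence $[{\rm Dec}(\Theta_0):{\rm Ine}(\Theta_0)]<\infty$; but the theorem demands a curve with $[{\rm Dec}(C):{\rm Ine}(C)]=\infty$, and the paper proves exactly this for its curve $C$, which is chosen among the eight curves $\Theta_x,\Gamma_y$. (Separately, your ``unconditional'' argument tacitly uses the centrality of $\sigma$ to get ${\rm Dec}(\Theta_0)$ of finite index, and that already fails for $E=F$: the automorphism of $X$ induced by $(x,y)\mapsto(x+y,y)$ on $E\times E$ carries $\Theta_0$ to infinitely many distinct curves.)

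The paper's proof does not pass through finite-index subgroups at all for the entropy statement. It chooses $C$ among the eight curves $\Theta_x,\Gamma_y$ (in the paper's notation $C=E_4$) and writes down two explicit pencils $\varphi_1,\varphi_2$, each carrying a fibre of Kodaira type $IV^*$ in which $C$ is the \emph{central} component, the one meeting three others. Any Mordell--Weil translation fixing the component group of such a fibre must fix those three intersection points on $C\simeq\P^1$ and hence lies in ${\rm Ine}(C)$; picking infinite-order $f_i\in{\rm MW}(\varphi_i)$ and invoking Proposition~\ref{prop22} gives $f\in\langle f_1,f_2\rangle\subset{\rm Ine}(C)$ with $h_{\rm top}(f)>0$, with no hypothesis on $E,F$. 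For $[{\rm Dec}(C):{\rm Ine}(C)]=\infty$ the paper exhibits a third $IV^*$-pencil in which $C$ is now a \emph{tip} component met by two disjoint sections, so the associated translation restricts to $C$ as a nontrivial affine shift $x\mapsto x+a$. Only the inequality $[{\rm Aut}(X):{\rm Dec}(C)]<\infty$ uses non-isogeny, via the centrality of $\sigma$ exactly as you established.
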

The properties (1) and (2) claim that ${\rm Dec}\, (C)$ is almost the same as ${\rm Aut}\, (S)$ and ${\rm Ine}\, (C)$ is much smaller than ${\rm Dec}\, (C)$ but ${\rm Ine}\, (C)$ yet enjoys rich dynamics as explained above. Note that if $E$ and $F$ are not isogenous, then ${\rm Km}\, (E\times F)$ is a $2$-elementary K3 surface in the sense of \cite{Ni81}, and at the same time, it is a finite double cover of a non-classical Coble surface in the sense of \cite{DZ01}. So, it is very close to Professor Igor Dolgachev's example, even though our proof is completely different. We prove Theorem \ref{thm1} in Section \ref{sect3}.

Let $\zeta_3 = (-1 + \sqrt{-3})/2 \in \C$ be a primitive third root of unity and $E_{3}$ the elliptic curve of period $\zeta_3$. Then $E_3 \times E_3$ has an automorphism $\tau_3$ of order three defined by 
$$\tau_3^*(x, y) =(\zeta_3 x, \zeta_3^2y)\,\, .$$
Here $(x, y)$ are the standard coordinates of the universal covering space $\C \times \C$ of $E_3 \times E_3$. The minimal resolution $S_3$ of the quotient surface $(E_3 \times E_3)/\langle \tau_3 \rangle$ is a projective K3 surface called (one of) the most algebraic K3 surfaces (\cite{Vi83}). 

\begin{theorem}\label{thm2}
Question \ref{ques1} is affirmative for the most algebraic K3 surface $S_3$. More precisely, there is a smooth rational curve $C \subset S_3$ satisfying the following two perperties:

\begin{enumerate}
\item $[{\rm Aut}\,(S_3) : {\rm Dec}\,(C)] < \infty$ and $[{\rm Dec}\,(C) : {\rm Ine}\, (C)] = \infty$ and 
\item There is $f \in {\rm Ine}\, (C)$ with $h_{{\rm top}}(f) > 0$. 
\end{enumerate}
\end{theorem}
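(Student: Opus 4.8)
The plan is to realize $S_3$ explicitly as a Kummer-type quotient and to produce the automorphism $f$ by lifting an appropriate automorphism of the product $E_3 \times E_3$. Since $S_3$ is the minimal resolution of $(E_3 \times E_3)/\langle \tau_3 \rangle$, I would first analyze the fixed locus of $\tau_3$ on $E_3 \times E_3$ and the configuration of $(-2)$-curves on $S_3$ arising from the resolution. The key point is that $S_3$ has Picard number $20$ and a large automorphism group; among the $(-2)$-curves, certain ones are exceptional over the quotient singularities and certain ones are images of $\tau_3$-invariant curves on $E_3 \times E_3$ (e.g.\ images of $E_3 \times \{0\}$, $\{0\} \times E_3$, the diagonal, and translates). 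The smooth rational curve $C$ will be one of these, chosen so that it is pointwise fixed by a suitable infinite subgroup.

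Second, I would construct automorphisms of $E_3 \times E_3$ that commute with $\tau_3$ and hence descend to $S_3$. The natural source is $\mathrm{GL}_2(\Z[\zeta_3])$ acting on $\C^2 = \mathrm{Lie}(E_3 \times E_3)$ by matrices that preserve the lattice $\Z[\zeta_3]^2$ and commute with $\mathrm{diag}(\zeta_3, \zeta_3^2)$ (so diagonal or anti-diagonal $\Z[\zeta_3]$-matrices), together with translations by torsion points. A hyperbolic element of this group — for instance one whose characteristic polynomial is a Salem polynomial, or more simply a product of two shears — induces on $S_3$ an automorphism $f$ of positive entropy, since the entropy is detected by the spectral radius on $H^{1,1}$, which for the quotient is governed by the induced action on $H^{1,1}(E_3\times E_3)$ plus the permutation action on the exceptional classes. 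I would arrange the translation part so that $f$ fixes $C$ as a set and, crucially, acts trivially on $C$: because $C \cong \P^1$, triviality on $C$ is equivalent to fixing three points of $C$, which can be forced by making $f$ fix the relevant torsion/ramification points upstairs.

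Third, for property (1) I would argue as in the Kummer case of Theorem \ref{thm1}. The inclusion $[\mathrm{Aut}(S_3) : \mathrm{Dec}(C)] < \infty$ follows because $C$ is one of only finitely many $(-2)$-curves in its $\mathrm{Aut}(S_3)$-orbit type (any automorphism permutes the distinguished curves, and a finite-index subgroup stabilizes $C$). For $[\mathrm{Dec}(C) : \mathrm{Ine}(C)] = \infty$ I would exhibit inside $\mathrm{Dec}(C)$ an infinite family of elements inducing distinct automorphisms of $C \cong \P^1$ — e.g.\ restrictions to $C$ of diagonal scalings in $\mathrm{GL}_2(\Z[\zeta_3])$, or of translations, which act on $C$ as an infinite subgroup of $\mathrm{PGL}_2(\C)$; since the restriction map $\mathrm{Dec}(C) \to \mathrm{Aut}(C)$ has infinite image, its kernel $\mathrm{Ine}(C)$ has infinite index. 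Combined with the $f$ from the second step, which lies in $\mathrm{Ine}(C)$ and has $h_{\mathrm{top}}(f) > 0$, this gives (2).

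The main obstacle I anticipate is the simultaneous bookkeeping in the second step: choosing the linear part of the lifted automorphism to be hyperbolic (guaranteeing positive entropy) while simultaneously choosing the translation part so that the descended map fixes $C$ pointwise rather than merely setwise. One must check that the translation normalizing the chosen hyperbolic linear map can be taken to stabilize $C$ and fix enough points of $C$; this amounts to solving a linear congruence for the fixed points of the affine map on $E_3 \times E_3$ and verifying those fixed points lie on the curve lifting $C$. A secondary subtlety is verifying that the spectral radius of $f$ on $H^{1,1}(S_3)$ genuinely exceeds $1$ — i.e.\ that the hyperbolicity on $H^{1,1}(E_3\times E_3)$ is not killed by passing to the $\tau_3$-invariant part and adjoining the exceptional divisors — but this is a finite eigenvalue computation once the matrix is fixed.
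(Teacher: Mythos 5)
There is a genuine gap at the heart of your construction: the group you propose to draw $f$ from is finite, so it contains no element of positive entropy. An automorphism of $E_3\times E_3$ descends to $(E_3\times E_3)/\langle\tau_3\rangle$ only if it normalizes $\langle\tau_3\rangle$; as you note, this forces the linear part $M\in{\rm GL}_2(\Z[\zeta_3])$ to be diagonal or anti-diagonal. But a diagonal (or anti-diagonal) matrix lies in ${\rm GL}_2(\Z[\zeta_3])$ only if its nonzero entries are units of $\Z[\zeta_3]$, i.e.\ sixth roots of unity, so all such $M$ have finite order; in particular there is no hyperbolic element, no Salem characteristic polynomial, and no shear (a shear does not normalize $\langle{\rm diag}(\zeta_3,\zeta_3^2)\rangle$ at all). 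The translation parts do not help: $t_a$ normalizes $\langle\tau_3\rangle$ only when $(\tau_3-1)a=0$, a finite condition. Hence every automorphism of $S_3$ induced from the abelian surface has zero entropy, and the same computation kills your proposed proof that $[{\rm Dec}(C):{\rm Ine}(C)]=\infty$ via "diagonal scalings or translations": those restrict to a finite subgroup of ${\rm Aut}(C)$. The positive-entropy automorphisms of $S_3$, and the infinite-order elements of ${\rm Dec}(C)/{\rm Ine}(C)$, simply do not lift to $E_3\times E_3$.

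The paper's route avoids this by working on $S_3$ itself with its $24$ visible $(-2)$-curves. Taking $C=G_3$, one exhibits two distinct elliptic fibrations whose $III^*$ fibers both contain $C$ as the multiplicity-$4$ component meeting three other components, each fibration having two disjoint sections through the same fiber component, hence a Mordell--Weil translation of infinite order. A finite-index subgroup of each Mordell--Weil group preserves every component of the $III^*$ fiber and therefore fixes the three nodes on $C\simeq\P^1$, so it fixes $C$ pointwise; Proposition \ref{prop22} (resting on the lattice-theoretic Proposition \ref{prop21}(3)) then produces $f\in\langle f_1,f_2\rangle\subset{\rm Ine}(C)$ with $h_{\rm top}(f)>0$. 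The index statements use a third fibration (for $[{\rm Dec}(C):{\rm Ine}(C)]=\infty$, via a translation acting on $C\setminus\{\text{pt}\}\simeq\C$ by a nonzero additive shift) and the fact that ${\rm Aut}^0(S_3)$ commutes with $g_3$ and so permutes the six curves in $S_3^{g_3}$ (for $[{\rm Aut}(S_3):{\rm Dec}(C)]<\infty$). If you want to salvage your write-up, you must replace the entire second step by a source of infinite-order automorphisms intrinsic to $S_3$, such as these Mordell--Weil groups.
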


As an application of Theorem \ref{thm2}, we also show the following:

\begin{theorem}\label{cor1}
There are a smooth rational surface $W$ and a smooth rational curve $C \subset W$ such that 
\begin{enumerate}
\item $[{\rm Aut}\,(W) : {\rm Dec}\,(C)] < \infty$ and $[{\rm Dec}\,(C) : {\rm Ine}\, (C)] = \infty$;
\item There is $f \in {\rm Ine}\, (C)$ with $h_{{\rm top}}(f) > 0$; and 
\item $|mK_W| = \emptyset$ for $m = -1$ and $-2$, while $|-3K_W|$ consists of a unique element. In particular, $W$ is not isomorphic to any Coble surface. 
\end{enumerate} 
\end{theorem}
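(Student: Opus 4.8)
The plan is to derive Theorem~\ref{cor1} from Theorem~\ref{thm2} by descending the $\langle \tau_3 \rangle$-action. Recall that $S_3$ is the minimal resolution of $(E_3\times E_3)/\langle \tau_3\rangle$, but for this application the relevant observation is that the automorphism $f \in {\rm Ine}\,(C)$ of $S_3$ produced in Theorem~\ref{thm2} can be chosen to commute with (the lift to $S_3$ of) a suitable finite group action, or more directly, that there is a finite-order automorphism $\iota$ of $S_3$ of order three preserving the configuration in such a way that $W$ is the (minimal resolution of the) quotient $S_3/\langle \iota\rangle$ and $C$ descends to a smooth rational curve on $W$. Concretely, I would take for $\iota$ the natural order-three symmetry coming from the two factors of $E_3\times E_3$ (for instance the one induced by $(x,y)\mapsto (\zeta_3 x, \zeta_3 y)$, which commutes with $\tau_3$), check that it descends to $S_3$, and arrange that the rational curve $C$ of Theorem~\ref{thm2} is $\iota$-stable and that $f$ commutes with $\iota$ — this is where one must be slightly careful in choosing $C$ and $f$ within the construction of Theorem~\ref{thm2}.

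Granting this, set $W$ to be the minimal resolution of $S_3/\langle \iota\rangle$ and let $C \subset W$ be the image of $C \subset S_3$ (a smooth rational curve, since $C$ is mapped isomorphically to its image if $\iota|_C$ is trivial, which I would build into the choice). Then $f$ descends to an automorphism $\bar f$ of $W$. First I would verify that $W$ is a smooth rational surface: $S_3/\langle\iota\rangle$ has only cyclic quotient (hence rational) singularities, and since $S_3$ is a K3 surface with a non-symplectic order-three automorphism, the quotient is rational (its minimal resolution has $p_g = q = 0$ and negative Kodaira dimension, by the standard classification of quotients of K3's by non-symplectic automorphisms). Next, the entropy statement: $h_{{\rm top}}(\bar f) > 0$ follows because the characteristic polynomial of $\bar f^*$ on $H^{1,1}(W)$ shares its Salem factor with that of $f^*$ on $H^{1,1}(S_3)$ — descent to a quotient and passing to a blow-up only changes the action by cyclotomic factors, so the spectral radius is unchanged and $\rho(\bar f) = \rho(f) > 1$. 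The inertia/decomposition statements $[{\rm Aut}\,(W):{\rm Dec}\,(C)] < \infty$ and $[{\rm Dec}\,(C):{\rm Ine}\,(C)] = \infty$ transfer from $S_3$ by the same mechanism used in Theorem~\ref{thm2}: ${\rm Dec}\,(C)$ acts on the finite-index sublattice spanned by $C$ and the exceptional curves, and the infinite index of the inertia subgroup is detected by the action on a complementary class, exactly as before.

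The last and genuinely new point is part~(3): the pluricanonical behaviour of $W$. By construction $-K_W$ is a combination of exceptional divisors of the resolution together with the pullback of $-K_{S_3/\langle\iota\rangle}$, which in turn is governed by the ramification divisor of $S_3 \to S_3/\langle\iota\rangle$, i.e.\ by the fixed locus of $\iota$. Since $\iota$ is a non-symplectic order-three automorphism of the K3 surface $S_3$, the canonical bundle formula for the quotient gives $3K_W \sim -(\text{fixed curves}) + (\text{exceptional correction})$, and a direct computation of these divisors shows $|{-}K_W| = |{-}2K_W| = \emptyset$ while $|{-}3K_W|$ is a single effective divisor (the one coming from the fixed curve of $\iota$, pulled back). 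Since any Coble surface $W'$ satisfies $|-2K_{W'}| \ne \emptyset$ by definition, $W$ cannot be a Coble surface. The main obstacle, and the step needing real care, is precisely this bookkeeping: one must identify $\iota$, compute its fixed locus on $S_3$ explicitly (in terms of the elliptic curve $E_3$ with complex multiplication by $\zeta_3$), and then push through the canonical bundle formula and the resolution corrections to nail down that the pluricanonical multiplicity is exactly three — neither one nor two — and that the $|-3K_W|$ system is a single point.
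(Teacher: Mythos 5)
Your construction is the paper's: your $\iota$, induced by $(x,y)\mapsto(\zeta_3 x,\zeta_3 y)$, differs from the paper's $g$ (induced by $(x,y)\mapsto(x,\zeta_3 y)$) only by composition with $\tau_3$, so it generates the same order-three group on $S_3$; $W$ is the minimal resolution of $S_3/\langle g\rangle$, the curve is $\overline{C}=\overline{G}_3$ (which lies in the pointwise fixed locus $S_3^{g}$, so $\iota|_C=\mathrm{id}$ comes for free), $f$ and the infinite-order element $h$ of ${\rm Dec}(C)$ descend because they lie in ${\rm Aut}^0(S_3)$ and hence commute with $g$ by the Torelli theorem, and part (3) is exactly the ramification-formula computation $-K_{\overline{W}}\sim_{\Q}\frac{2}{3}\sum(\overline{F}_i+\overline{G}_i)$ followed by the argument that any member of $|-nK_{\overline{W}}|$ must contain $\sum(\overline{F}_i+\overline{G}_i)$ repeatedly because these are disjoint curves of self-intersection $-6$. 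The paper cites \cite[Corollary 1.2]{DN11} for $h_{\rm top}(f_W)=h_{\rm top}(f)$, which is cleaner than the ``only cyclotomic factors change'' heuristic but amounts to the same conclusion.

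The one genuine gap is your treatment of $[{\rm Aut}(W):{\rm Dec}(\overline{C})]<\infty$. This does \emph{not} ``transfer from $S_3$ by the same mechanism used in Theorem \ref{thm2}'': an arbitrary element of ${\rm Aut}(W)$ is not a priori induced by an automorphism of $S_3$, and the mechanism of Proposition \ref{prop42} (pass to the finite-index subgroup ${\rm Aut}^0$, use $f^*\omega_S=\omega_S$ and $g^*=\mathrm{id}$ on ${\rm NS}$ to force commutation with $g$, hence preservation of $S^g$) has no analogue on the rational surface $W$, which carries no holomorphic $2$-form. Your lattice-theoretic sketch (``${\rm Dec}(C)$ acts on the finite-index sublattice spanned by $C$ and the exceptional curves\dots'') does not produce the required finite-index statement either. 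The missing idea is that part (3) feeds into part (1): since $|-3K_W|$ consists of the single divisor $2\sum_{i=1}^{3}(\overline{F}_i+\overline{G}_i)+\sum_{j,k}C_{jk}$, this divisor is ${\rm Aut}(W)$-stable, so ${\rm Aut}(W)$ permutes the six curves $\overline{F}_i,\overline{G}_i$, giving a homomorphism to $\Sigma_6$ whose kernel has finite index and is contained in ${\rm Dec}(\overline{G}_3)$. (The other half of (1), $[{\rm Dec}(\overline{C}):{\rm Ine}(\overline{C})]=\infty$, is obtained simply by descending $h$ and noting $h_W|_{\overline{C}}$ has infinite order; your ``complementary class'' phrasing should be replaced by this.) With that repair the proposal matches the paper's proof.
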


We construct $W$ as the minimal resolution of a suitable quotient of $S_3$. Compare Theorem \ref{cor1} with results due to McMullen \cite{Mc07}, rich rational surface dynamics arizing from the decomposition group of the aniti-canonical cuspidal curve and Bedford-Kim \cite{BK09}, rich rational surface dynamics with no stable curve. In Section \ref{sect4}, we prove Theorems \ref{thm2} and derive Theorem \ref{cor1} from Theorem \ref{thm2}. 

As a referee asked, it is interesting to see "how large the subset $\{f \in {\rm Ine}\, (C)\,\, |\,\, h_{{\rm top}}(f) > 0\}$ of ${\rm Ine}\, (C)$ is" for each of our constructions above. The author feels that the subset should be "quite large" but he has no good idea to formulate precisely. 

\begin{theorem}\label{thm3}
Question \ref{ques1} is affirmative for some K3 surface $S$ with no non constant global meromorphic function, i.e., of algebraic dimension $0$. More precisely, there are a K3 surface of algebraic dimension $0$ and smooth rational curves $C \subset S$ and $D \subset S$ satisfying the following perperties:

\begin{enumerate}
\item ${\rm Aut}\,(S) = {\rm Dec}\,(C) = {\rm Ine}\, (C)$, and there is $f \in {\rm Ine}\, (C)$ with $h_{{\rm top}}(f) > 0$;
\item ${\rm Aut}\,(S) = {\rm Dec}\,(D)$ and ${\rm Ine}\, (D) = \{id_S\}$.
\end{enumerate}
\end{theorem}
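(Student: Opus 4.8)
The plan is to construct $S$ as a K3 surface of algebraic dimension $0$ carrying an automorphism $g$ of positive entropy, by the standard technique of gluing Salem data into a transcendental lattice. Concretely, I would start from a Salem polynomial $\phi(t)$ of degree $2r$ whose companion matrix acts on a lattice $L$ that can be realized as $H^2(S,\Z)$ for a K3 surface $S$ with $\rho(S) = \operatorname{rank} L - (\text{transcendental part})$, arranging that the eigenvector for the Salem number $\lambda$ and its conjugate span a $2$-dimensional plane on which the period of $S$ must lie; by a Torelli-type existence argument (as in McMullen's work on K3 automorphisms of positive entropy with no invariant curve, and Oguiso's constructions of non-projective examples) one can choose the period generically within this constraint so that $\operatorname{NS}(S)$ has small rank and $S$ is non-algebraic. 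The key point to engineer is that the invariant part of $\operatorname{NS}(S)$ under $g^*$ contains exactly the classes of two disjoint smooth rational curves $C$ and $D$, each a $(-2)$-curve, and that $g$ fixes $C$ pointwise (acting on $N_{S|C}$ by multiplication by $\lambda$) while $g$ acts nontrivially on $D$.

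The steps, in order: (i) fix the lattice-theoretic data — a Salem number $\lambda$, an isometry $F$ of the K3 lattice $\Lambda_{K3}$ with characteristic polynomial a product of $\phi(t)$ and cyclotomic factors, such that $F$ has a $2$-eigenspace meeting $\Lambda_{K3}\otimes\R$ in a positive plane $P$ with irrational slope; (ii) choose a period point $[\sigma]\in\P(P\otimes\C)$ generic enough that $\operatorname{NS}(S) = P^\perp\cap\Lambda_{K3}$ is a specified small negative-definite-plus-hyperbolic lattice, and in particular $S$ has algebraic dimension $0$ (no elliptic pencil, no ample class) — here the signature of $\operatorname{NS}(S)$ must be negative definite or contain no isotropic vector to force $a(S)=0$; (iii) invoke the Torelli theorem to produce $S$ and $g\in\operatorname{Aut}(S)$ with $g^* = F$; (iv) identify inside $\operatorname{NS}(S)$ two $(-2)$-classes $[C],[D]$ that are effective (using Riemann–Roch on K3 and checking they are represented by irreducible curves, i.e. primitive and not meeting the walls badly), with $[C]$ in the $F$-fixed sublattice and $[D]$ in an $F$-stable but non-pointwise-fixed configuration; (v) verify $g(C)=C$ with $g|_C = \operatorname{id}_C$ by a local analysis at the (necessarily nonempty, by Lefschetz) fixed locus — since $C\cong\P^1$ and $g$ fixes $[C]$, $g|_C$ is a Möbius transformation, and one shows it has more than two fixed points hence is the identity, while the transverse action on $N_{S|C}$ is multiplication by the Salem eigenvalue $\lambda$; (vi) show $\operatorname{Aut}(S) = \langle g\rangle$ (up to finite index / torsion) by proving $\operatorname{Aut}(S)$ acts on the rank-one or rank-two $\operatorname{NS}(S)$ with finite image modulo $g^{\Z}$, so that $\operatorname{Dec}(C)=\operatorname{Ine}(C)=\operatorname{Aut}(S)$ automatically, and similarly $\operatorname{Dec}(D)=\operatorname{Aut}(S)$ but $\operatorname{Ine}(D)=\{\operatorname{id}\}$ because no nontrivial automorphism in $\langle g\rangle$ fixes $D$ pointwise.

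The hard part will be step (ii)–(iv): simultaneously forcing $a(S)=0$ (which constrains $\operatorname{NS}(S)$ to carry no isotropic or positive class), keeping $g^*$ of positive spectral radius (which needs the Salem factor to survive on the transcendental lattice $T(S)$, hence $\operatorname{rank} T(S)\ge 2r$ and $\operatorname{NS}(S)$ correspondingly thin), and yet retaining \emph{enough} of $\operatorname{NS}(S)$ to house two honest disjoint smooth rational curves — these pull in opposite directions, so the lattice has to be chosen with some care, presumably of rank $\ge 2$ with a fixed $(-2)$-vector for $C$ and a $g^*$-orbit generating the class of $D$. A secondary subtlety is checking that the putative $(-2)$-classes are represented by \emph{smooth irreducible rational curves} rather than sums of such; this is a standard but delicate Weyl-chamber argument, and one must also ensure $C\cap D=\emptyset$ so that the two statements do not interfere. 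Once the lattice arithmetic is pinned down, items (v) and (vi) are routine: the pointwise-fixing of $C$ follows from the eigenvalue structure of $g^*$ on $N_{S|C}$ being a Salem number (so $g|_C$ cannot be a nontrivial finite-order or infinite-order Möbius map compatible with that normal action), and the equalities of decomposition/inertia groups with $\operatorname{Aut}(S)$ come for free from $\operatorname{Aut}(S)$ being virtually cyclic on such a K3.
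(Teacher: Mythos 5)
Your overall strategy (a McMullen--Oguiso style Salem/Torelli construction of a non-projective K3 with $\mathrm{NS}$ negative definite) is the right family of ideas --- indeed the paper does not construct the surface from scratch but simply quotes \cite{Og10}, where exactly such a surface $S$ is produced with $\mathrm{Aut}(S)=\langle g\rangle\simeq\Z$, $h_{\mathrm{top}}(g)=\log\alpha$ for the Salem root of $\varphi_{14}$, and $\mathrm{NS}(S)\simeq E_8$ generated by eight smooth rational curves in the $E_8$ configuration, with $g$ fixing $C=C_3$ pointwise. However, your proposal leaves precisely the hard existence step (your (i)--(iv): finding the isometry of the K3 lattice, choosing the period, and guaranteeing that the $(-2)$-classes are represented by honest smooth rational curves) as ``lattice arithmetic to be pinned down with some care''; this is the substance of \cite{Og10} and cannot be waved through. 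Moreover your mechanism for $g|_C=\mathrm{id}_C$ is vague: the paper gets it (and the equality $\mathrm{Aut}(S)=\mathrm{Ine}(C)$ for \emph{all} automorphisms, not just $g$) from the rigidity of the configuration --- the $E_8$ Dynkin diagram has no nontrivial symmetry, so every automorphism preserves each $C_i$ individually, and $C_3$ meets three other components at three distinct points of $\P^1$, forcing the restriction to be the identity. Your alternative appeal to ``the eigenvalue structure on $N_{S|C}$'' is circular, and your assertion that $N_{S|C}$ is multiplied by the Salem number $\lambda$ itself is off: since $g^*\omega_S=\delta\omega_S$ with $\delta$ a \emph{unimodular} Galois conjugate of $\alpha$ and $g|_C=\mathrm{id}$, the normal action is forced to be multiplication by (the inverse of) $\delta$, not by the real number $\lambda>1$.

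The more serious flaw is in part (2). You insist on $C\cap D=\emptyset$ ``so that the two statements do not interfere,'' but the paper's proof that $\mathrm{Ine}(D)=\{\mathrm{id}_S\}$ \emph{depends} on $D=C_8$ meeting the pointwise-fixed curve $C=C_3$ transversally at a point $P$: from $g|_C=\mathrm{id}_C$ and $g^*\omega_S=\delta\omega_S$ one reads off that $g|_D$ is the M\"obius map $x\mapsto\delta x$ in an affine coordinate centered at $P$, and since $\delta$ is a Galois conjugate of a Salem number it is not a root of unity, so $g^n|_D\neq\mathrm{id}_D$ for all $n\neq 0$. With $C$ and $D$ disjoint you have no handle on $g|_D$ at all: if $D$ met three or more preserved curves you would get $g|_D=\mathrm{id}_D$ (killing the claim $\mathrm{Ine}(D)=\{\mathrm{id}_S\}$), and with two or fewer you have produced no argument that the multiplier is of infinite order. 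So your statement that $\mathrm{Ine}(D)=\{\mathrm{id}\}$ ``comes for free'' is unjustified, and the disjointness you impose actually removes the one tool that makes it provable.
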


As a candidate $S$ in Theorem \ref{thm3}, we choose a K3 surface constructed in \cite{Og10}. Note that $\{f \in {\rm Ine}\, (C)\,\, |\,\, h_{{\rm top}}(f) > 0\} = {\rm Ine}\, (C) \setminus \{id_S\}$ in (2), as ${\rm Aut}\, (S) \simeq \Z$ (\ref{Og10}). Our proof is based on a very special feature occuring only on a non-projective K3 surface. We prove Theorem \ref{thm3} in Section \ref{sect5}. 

In Question \ref{ques1}, there are two essential issues to consider; "How to find an automorphisms of positive entropy" and "How to find a candidate curve $C$". Our main approach in Theorems \ref{thm1}, \ref{thm2} is to seek two different elliptic fibrations with positive Mordell-Weil rank. In Section \ref{sect2}, we briefly recall an existing method (Proposition \ref{prop21}) and synthesize it with the inertia group of some smooth rational curve in fibers (Proposition \ref{prop22}). Proposition \ref{prop22} is the main tool in our proof of Theorems \ref{thm1}, \ref{thm2}. For Theorem \ref{thm3}, we use a non-symplectic automorphism of a K3 surface whose character of the action on the space of the global holomorphic $2$-forms is not root of unity but a Galois conjugate of a Salem number, 
{\it a very special property occuring only on non-projective K3 surfaces}. It is worth recalling that this property also played an essential role in constructing automorphisms of (necessarily non-projective) K3 surfaces with Siegel disk by McMullen (\cite{Mc02}, see also \cite{Og10}). 

{\bf Acknowledgements.} First of all, I would like to express my thanks to Professor Igor Dolgachev for his inspiring talk, interest in this work and valuable discussions. I would like to thank Professors Viacheslav Nikulin, Yuya Matsumoto, Shigeru Mukai, Matthias Schuett, Nessim Sibony, Xun Yu and referees for valuable suggestions, comments and careful reading and Professor JongHae Keum for his interest in this work. I would like to express my thanks to the organizers of Professor JongHae Keum's 60th birthday conference for invitation.

\section{Some basic terminologies for lattices and K3 surfaces}\label{sect01} 

In this section, we briefly recall basic terminologies concerning lattices and K3 surfaces, which are used throughout this note. 

We call a pair $(L, (*, **))$ of a free $\Z$-module $L$ of positive finite rank $r$ and a symmetric bilinear form $(*, **) : L \times L \to \Z$ {\it a lattice of rank} $r$. We often write the lattice $(L, (*, **))$ simply by $L$ when no confusion arizes. For any ring $A$, we denote by $L_{A}$ the $A$-module $L \otimes_{\Z} A$. We call the lattice $L = (L, (*, **))$ is hyperbolic (resp. negative definite) if the signature of the bilinear form $(*, **)$ is $(1, r-1)$ and $r \ge 2$ (resp. $(0, r)$ and $r \ge 1$), considered as the real symmetric bilinear form on $L_{\R}$. We call a lattice $L$ non-degenerate, if there is no $x \in L \setminus \{0\}$ such that $(x, y) = 0$ for all $y \in L$. 

Let $L$ be a hyperbolic lattice. Under the Eucldean topology of $L_{\R}$, the subset $\tilde{P}(L) := \{x \in L_{\R}\, |\, (x, x) > 0\}$ has two connected components. We choose and fix ${P}(L)$, which is any one of the two connected components of $\tilde{P}(L)$, and call it the positive cone. We denote by ${\rm O}(L)$ the orthogonal group of a lattice $L$
$${\rm O}(L) := \{f \in {\rm GL}\, (L)\,|\, (f(x), f(y)) = (x, y)\,\, \forall x, y \in L\}$$ and by ${\rm O}^{+}(L)$ the subgroup of ${\rm O}\, (L)$ consisting $f \in {\rm O}\, (L)$ such that $f_{\R} := f \otimes id_{\R} \in {\rm GL}\, (L_{\R})$ preserves the positive cone $P(L)$. We denote the boundary of $P(L)$ in $L_{\R}$ by $\partial P(L)$. Then $f_{\R}(\partial P(L)) = \partial P(L)$ if $f \in {\rm O}^{+}(L)$. 

Let $S$ be a K3 surface. We denote a nowhere vanishing global holomorphic $2$-form on $S$ by $\omega_S$. By definition of K3 surface, $\omega_S$ is unique up to $\C^{\times}$. Important lattices in this note are the second cohomology lattice $H^2(S, \Z)$, the N\'eron-Severi lattice ${\rm NS}\, (S)$ and the transcendental lattice $T(S) := {\rm NS}\, (S)^{\perp}$ in $H^2(S, \Z)$ for a K3 surface $S$. The symmetric bilinear form on these lattices are the intersection form. $T(S)$ is also the minimal primitive sublattice of $H^2(S, \Z)$ such that $\C\omega_S \subset T(S)_{\C}$. Note also that if ${\rm NS}\, (S)$ is non-degenerate, then ${\rm NS}\, (S) \oplus T(S) \subset H^2(S, \Z)$ and $[H^2(S, \Z) : {\rm NS}\, (S) \oplus T(S)] < \infty$. We refer to \cite{BHPV04} for basic facts on K3 surfaces and surfaces. 

\section{Complex dynamics arizing from two different elliptic fibrations}\label{sect2} 

Our main result of this section is Proposition \ref{prop22}. 

Let $X$ be a projective K3 surface. We call a surjective morphism $\varphi : X \to \P^1$ an elliptic fibration if the generic fiber $X_{\eta}$ is an elliptic curve defined over the function field $\C(\P^1)$ of the base. We always choose the orgin $O$ of $X_{\eta}$ in $X_{\eta}(\C(\P^1))$. The set $X_{\eta}(\C(\P^1))$ then forms an additive group ${\rm MW}\,(\varphi)$ with unit $O$, called the Mordell-Weil group of $\varphi$. As $\varphi : X \to \P^1$ is relatively minimal, the birational automorphism of $X$ given by the translation on $X_{\eta}$ by $P \in {\rm MW}\,(\varphi)$ is a biregular automorphism of $X$, i.e., ${\rm MW}\,(\varphi) \subset {\rm Aut}\, (X)$. By definition, ${\rm MW}\, (\varphi)$ is an abelian group and coincides with the group of translations of global sections of $\varphi$, under the natural bijective correspondence between $X_{\eta}(\C(\P^1))$ and the set of global sections of $\varphi$. 

In this section, we are interested in a projective K3 surface in the following:
\begin{setup}\label{st21}
$X$ is a projective K3 surface admitting two different elliptic fibrations $\varphi_i : X \to \P^1$ ($i =1$, $2$) whose Mordell-Weil group ${\rm MW}(\varphi_i)$ has an element $f_i$ {\it of infinite order} for each $i = 1$, $2$. 
\end{setup}
Here two elliptic fibrations $\varphi_i$ ($i=1$, $2$) are said to be different if the generic point of the generic fiber, in the sense of scheme, are different (non-closed) points of $X$. 
 
We say that a property (P) (concerning closed points of $X$) holds for very general points if there is a countable union $B$ of proper closed subvarieties of $X$ such that the property (P) holds for any closed point $x \in X \setminus B$. 

K3 surfaces in Set-up \ref{st21} have rich complex dynamical properties, as the next proposition shows: 
\begin{proposition}\label{prop21} Under Set-up \ref{st21}, the following holds: 

\begin{enumerate}
\item The orbit $\langle f_1, f_2 \rangle \cdot x$ is dense in $X$ for very general points $x \in X$. Here topology of $X$ is not the Zariski topology but the Euclidean topology  (cf. \cite[Questions, Page 206]{Mc02} for a relevant open question). 

\item There is a positive integer $n$ such that $\langle f_1^n, f_2^n \rangle = \langle f_1^n \rangle * \langle f_2^n \rangle \simeq \Z * \Z$ (the free product). 

\item There is $f \in \langle f_1, f_2 \rangle$ such that $h_{{\rm top}}(f) > 0$.
\end{enumerate}
\end{proposition}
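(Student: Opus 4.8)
Parts (2) and (3) are assertions about the isometries $f_1^{*}$, $f_2^{*}$ of the hyperbolic lattice $N := {\rm NS}(X)$ (which has rank $\ge 3$), while (1) is geometric and uses the two pencils $|F_1|$, $|F_2|$ directly. The structural facts behind (2) and (3) are these: each $f_i = t_{P_i}$ is the translation by a non-torsion section, hence of infinite order in ${\rm Aut}(X)$ and acting on $N$ by a quasi-unipotent transformation of infinite order (Shioda's description of the Mordell--Weil action; in particular $h_{{\rm top}}(t_{P_i}) = 0$). So $f_i^{*}$ is an element of ${\rm O}^{+}(N)$ of infinite order and spectral radius $1$; being of infinite order in the discrete group ${\rm O}(N)$ it is not elliptic, and being of spectral radius $1$ it is not loxodromic, hence it is \emph{parabolic}. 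Since $t_{P_i}$ fixes every fibre of $\varphi_i$ it fixes the isotropic nef class $[F_i] \in \partial P(N)$, so the unique fixed point of the parabolic $f_i^{*}$ on $\partial P(N)$ is the ray $\R_{\ge 0}[F_i]$. Finally $[F_1] \ne [F_2]$, since proportionality of these primitive isotropic classes would force $|F_1| = |F_2|$, i.e. $\varphi_1 = \varphi_2$, contrary to Set-up \ref{st21}.

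Let $\B$ be the closed-ball compactification of the hyperbolic space of $N$, i.e. $\B = (\overline{P(N)} \setminus \{0\})/\R_{>0}$, with boundary sphere $(\partial P(N) \setminus \{0\})/\R_{>0}$; the group ${\rm O}^{+}(N)$ acts on $\B$, and I write $\xi_i \in \B$ for the boundary point $\R_{\ge 0}[F_i]$. Fix disjoint closed neighbourhoods $U_i$ of $\xi_i$ with $\B \setminus (U_1 \cup U_2) \ne \emptyset$. Because a parabolic isometry carries any compact set avoiding its fixed point into any prescribed neighbourhood of that point once the exponent is large, there is $m \ge 1$ with $(f_i^{*})^{k}(\B \setminus U_i) \subseteq U_i$ whenever $|k| \ge m$. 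Then (2) follows from the two-set table-tennis lemma for free products, applied to the action of $\langle f_1, f_2 \rangle$ on $\B$: $\langle f_1^{m}, f_2^{m} \rangle = \langle f_1^{m} \rangle * \langle f_2^{m} \rangle \simeq \Z * \Z$. For (3), put $g := f_1^{m} f_2^{m}$. Then $g(\B \setminus U_2) \subseteq U_1$ and, by the symmetric computation, $g^{-1}(\B \setminus U_1) \subseteq U_2$. Since $U_1 \subseteq \B \setminus U_2$, iterating the first inclusion gives $g^{k}(\B \setminus U_2) \subseteq U_1$ for every $k \ge 1$. Hence no power of $g^{*}|_{N}$ acts trivially on $\B$ (that would force $\B \setminus U_2 \subseteq U_1$), so $g^{*}|_{N}$ has infinite order and is not elliptic; and it is not parabolic, since a parabolic isometry attracts every point of $\B$ to its fixed point, which by the two inclusions above would have to lie in both $U_1$ and $U_2$. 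Therefore $g^{*}|_{N}$ is loxodromic, $\rho(g^{*}|_{N}) > 1$, and $h_{{\rm top}}(g) = \log \rho(g^{*}) \ge \log \rho(g^{*}|_{N}) > 0$, which is (3).

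For (1), write $\Gamma := \langle f_1, f_2 \rangle$ and let $Z$ be the Euclidean closure of the orbit $\Gamma \cdot x$, a closed $\Gamma$-invariant set. The input I would isolate is a fibrewise density statement: for very general $t \in \P^1$ the point $P_i(t)$ generates a Euclidean-dense subgroup of the smooth elliptic fibre $\varphi_i^{-1}(t)$. Granting it, choose $x$ very general, so that $F := \varphi_1^{-1}(\varphi_1(x))$ is a smooth elliptic curve with $\overline{\langle f_1 \rangle \cdot x} = F$, hence $F \subseteq Z$. As $[F] = [F_1] \ne [F_2]$, no fibre of $\varphi_2$ contains $F$, so $\varphi_2|_{F} : F \to \P^1$ is non-constant, hence finite and surjective. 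Thus for all $y \in F$ outside a countable set one has $\varphi_2^{-1}(\varphi_2(y)) = \overline{\langle f_2 \rangle \cdot y} \subseteq Z$, and as $y$ ranges over $F$ these fibres cover $X$ with at most countably many fibres removed, a Euclidean-dense subset. Since $Z$ is closed, $Z = X$, which is (1).

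The crux is the fibrewise density statement used in (1); over $\C$ it does not reduce to the classical specialization theorems over number fields. I would prove it over the complement of the singular fibres of $\varphi_i$ by passing to the real-analytic Betti coordinates (elliptic logarithm) $P_i(t) = a(t)\gamma_1(t) + b(t)\gamma_2(t)$ of the section: the subgroup generated by $P_i(t)$ fails to be Euclidean-dense in its fibre precisely when $p\,a(t) + q\,b(t) \in \Z$ for some $(p, q) \in \Z^{2} \setminus \{0\}$, and for each such $(p,q)$ and each $c \in \Z$ the vanishing locus of the real-analytic function $p\,a(t) + q\,b(t) - c$ is a proper closed subset of $\P^1$ — here one uses that a K3 surface admits no isotrivial elliptic fibration with trivial monodromy, so that $P_i$ genuinely varies. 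The union over all $(p,q,c)$ is then a countable union of proper closed subsets of $\P^1$ outside which $P_i(t)$ is dense in its fibre; pulling "very general" back through the finite morphism $\varphi_2|_{F}$ is then routine bookkeeping. I expect this density input, rather than the lattice-theoretic parts (2) and (3), to be the main obstacle.
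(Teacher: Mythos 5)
Your treatment of (2) and (3) is correct but takes a genuinely different route from the paper. The paper cites \cite{Og08} for (2) and proves (3) by contradiction: assuming every element of $\langle f_1,f_2\rangle$ has zero entropy, it invokes a structure theorem from \cite{Og07} (Theorem \ref{thm21}) producing a common fixed primitive isotropic class $v$ for a finite-index subgroup, then plays $v$ against the two fibre classes $v_1,v_2$ via the Hodge index theorem to force $(f_1^*)^n$ to have finite order, and finally uses that a projective K3 has no vector fields to contradict the infinite order of $f_1$. You instead classify $f_1^*,f_2^*$ as parabolic isometries with distinct boundary fixed points and run ping-pong on the ball compactification, getting (2) and (3) simultaneously and constructively ($f_1^mf_2^m$ is exhibited as loxodromic). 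This is a legitimate and arguably more self-contained argument; note only that your assertion that $f_i^*$ has infinite order on ${\rm NS}(X)$ is exactly the "polarized automorphism, no global vector fields" step that the paper uses at the end of its own proof, so it should be spelled out rather than attributed loosely to Shioda, and the "spectral radius $1$" claim is cleanest not via entropy but via the observation that a loxodromic isometry cannot fix a nonzero isotropic integral vector (its $1$-eigenspace is negative definite).

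For (1) the paper simply cites \cite{Ca01}, and your argument is essentially a reconstruction of Cantat's: fibrewise density of $\Z P_i(t)$ for very general $t$, then propagation through the second fibration. The propagation step is fine, but the density input has a real gap exactly where you flag it. The bad locus is $\bigcup_{(p,q,c)}\{t: p\,a(t)+q\,b(t)=c\}$, and to conclude it is a countable union of \emph{proper} closed subsets you must rule out that some single relation $p\,a+q\,b\equiv c$ holds identically. Your stated reason (no isotrivial fibration with trivial monodromy) only excludes the case where $a$ and $b$ are both constant; it does not exclude a non-constant section whose Betti coordinates satisfy one fixed $\Q$-linear relation. The missing argument is to analytically continue the identity around loops in the base: the monodromy transforms $(p,q)$ by the transpose action, so an identically-satisfied relation forces either a second, independent relation (whence $(a,b)$ constant, reducing to the case you did treat) or monodromy-invariance of the line $\R(p,q)$, which must then be confronted with the actual monodromy group of an elliptic K3 (it is nontrivial since there are singular fibres, but it can fix a vector, e.g.\ for $I_b$ monodromies, so this case needs separate care). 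As written, (1) is not complete as a standalone proof; since the paper delegates it entirely to \cite{Ca01}, the safe course is to do the same, or to supply the monodromy continuation argument in full.
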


\begin{proof} The assertion (1) is observed by \cite{Ca01}. The assertion (2) is proved by \cite{Og08} in a more general setting and is also related to Tits' aternative for the automorphism group of a compact K\"ahler manifold (\cite{Zh09} and references therein). 

The assertion (3) is implicit in \cite{Og07}. As the assertion (3) is crucial in this note, we shall give a complete proof here. 

Let $v_i \in {\rm NS}\, (X)$ be the fiber class of $\varphi_i$ ($i=1$, $2$). Then $v_i$ ($i=1$, $2$) are primitive integral nef classes and they are $\Q$-linearly independent in ${\rm NS}\,(X)_{\Q}$ as $\varphi_i$ are different. 

Now assuming to the contrary that $h_{{\rm top}}(f) = 0$ for all $f \in \langle f_1, f_2 \rangle$, we shall derive a contradiction. We use the following theorem proved in \cite{Og07}:

\begin{theorem}\label{thm21} Let $L$ be a hyperbolic lattice and $G < {\rm O}^+\, (L)$ a subgroup of ${\rm O}^+\, (L)$. Assume that the natural logarithm of spectral radius of $g$ is $0$ for all $g \in G$. Then, there are a subgroup $N$ of $G$ such that $[G:N] < \infty$ and a primitive integral element $v \in L \cap \partial P(L) \setminus \{0\}$ such that $g(v) = v$ for all 
$g \in N$. 
\end{theorem}

As $X$ is a smooth projective surface, ${\rm NS}\, (X)$ is a hyperbolic lattice of signature $(1, \rho(X) -1)$. Here $\rho(X)$ is the Picard number of $X$ and $\rho(X) \ge 2$ as an ample class of $X$ and the fiber class $v_1$ are $\Q$-linearly independent in ${\rm NS}\, (X)_{\Q}$. As usual, we choose the positive cone $P({\rm NS}\, (X))$ so that $P({\rm NS}\, (X))$ contains the ample cone of $X$. Then $\langle f_1, f_2 \rangle$ preserves the positive cone, i.e., $\langle f^*_1, f^*_2 \rangle < {\rm O}^{+}({\rm NS}\, (X))$. Here $f^*$ is the natural action of $f$ on ${\rm NS}\, (X)$. As we assume that $h_{{\rm top}}(f) = 0$ for all $f \in \langle f_1, f_2 \rangle$, i.e., the natural logarithm of the spectral radius of $f^*_{\C}$ is $0$ for all $f^* \in \langle f^*_1, f^*_2 \rangle$, it follows from Theorem \ref{thm21} that there is a finite index subgroup $N$ of $\langle f^*_1, f^*_2 \rangle$ and an integral primitive element $v \in {\rm NS}\, (X) \cap \partial P({\rm NS}\, (X)) \setminus \{0\}$ such that $f^*(v) = v$ for all $f \in N$. As $N$ is a finite index subgroup of $\langle f^*_1, f^*_2 \rangle$ and $f^*_1, f^*_2 \in \langle f^*_1, f^*_2 \rangle$, there is a positive integer $n$ such that $(f^*_1)^n, (f^*_2)^n \in N$. Then $(f_i^*)^n(v_i) = v_i$ and $(f_i^*)^n(v) = v$. As $v_1$ and $v_2$ are $\Q$-linearly independent in ${\rm NS}\, (X)_{\Q}$, it follows that either $v_1$ and $v$ are $\Q$-linearly independent or $v_2$ and $v$ are $\Q$-linearly independent. By changing the order if necessary, we may assume that $v_1$ and $v$ are $\Q$-linearly independent. Then $(v_1 + v)^2 > 0$ by the Hodge index theorem. As $(f_i^*)^{n}(v_1 + v) = v_1 + v$, it follows that $(f_i^*)^n$ is of finite order on ${\rm NS}\, (X)$. This is because the orthogonal complement of $v_1 + v$ in ${\rm NS}\, (X)$ is negative definite by the Hodge index theorem and it is preserved by $(f_1^*)^n$. (Note that ${\rm O}\, (L, (*, **))$ is finite if the integral bilinear form $(*, **)$ is negative definite.) 

As $X$ is a {\it projective} K3 surface, it follows that $f_1$ is of finite order also as an element of ${\rm Aut}\, (X)$. This is because $f_1$ is then an automorphism of $X$ as a polarized manifold and $X$ has no global vector field. However, this contradicts to our assumption that $f_1$ is of infinite order in ${\rm MW}\, (\varphi_1) < {\rm Aut}\, (X)$. 
\end{proof}

The next proposition synthesizes Proposition \ref{prop21} (3) and the inertia group of a smooth rational curve that we are looking for:

\begin{proposition}\label{prop22} Under Set-up \ref{st21}, we assume further that there is a smooth rational curve $C \subset X$ such that $\varphi_i$ has a singular fiber $F_i$ in which $C$ is an irreducible component meeting at least three other irreducible components of $F_i$, i.e., $F_i$ is of Kodaira type $I_{b}^{*}$, $II^{*}$, $III^{*}$ or $IV^*$ for both $i=1$ and $2$. Then there is $f \in {\rm Ine}\, (C)$ such that $h_{{\rm top}}(f) > 0$.  (See \cite[Page 565]{Ko63} for the notation. For instance, if $F_1$ is of Kodaira type $IV^*$ and $F_2$ is of Kodaira type $III^*$, then $C$ is the irreducible component of multiplicity $3$ of $F_1$ and at the same time the irreducible component of multiplicity $4$ of $F_2$.)
\end{proposition}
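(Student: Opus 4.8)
The plan is to extract the desired automorphism of positive entropy directly from suitable powers of $f_1$ and $f_2$, to show that these powers already lie in ${\rm Ine}\,(C)$, and then to invoke Proposition \ref{prop21}\,(3). The new input compared with Proposition \ref{prop21} is purely local along the two special fibers $F_1, F_2$.

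First I would examine how $f_i$ acts on the fiber $F_i$. Being the translation automorphism of a section of $\varphi_i$, the map $f_i$ is biregular on $X$ and commutes with $\varphi_i$, hence preserves $F_i = \varphi_i^{-1}(s_i)$ and permutes the irreducible components of $F_i$ while respecting their intersection pattern. For a Kodaira fiber of type $I_b^*$, $II^*$, $III^*$ or $IV^*$ the dual configuration is a finite tree of smooth rational curves meeting transversally at ordinary double points, with no three components passing through a common point; in particular its automorphism group is finite. Therefore some power $g_i := f_i^{n_i}$ with $n_i > 0$ fixes every irreducible component of $F_i$ setwise, so in particular $g_i(C) = C$ and $g_i$ fixes each component of $F_i$ adjacent to $C$.

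Next I would promote this to $g_i|_C = {\rm id}_C$. By hypothesis $C$ meets at least three other components $\Theta_{i,1}, \Theta_{i,2}, \Theta_{i,3}$ of $F_i$, and in these fiber types each intersection $C \cap \Theta_{i,j}$ is a single point $p_{i,j}$ with $p_{i,1}, p_{i,2}, p_{i,3}$ pairwise distinct. Since $g_i(C) = C$ and $g_i(\Theta_{i,j}) = \Theta_{i,j}$, we get $g_i(p_{i,j}) = g_i(C) \cap g_i(\Theta_{i,j}) = p_{i,j}$, so $g_i|_C$ is an automorphism of $C \simeq \P^1$ fixing three distinct points and is therefore ${\rm id}_C$. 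Hence $g_1, g_2 \in {\rm Ine}\,(C)$. Finally, a nonzero power of an infinite-order element of an abelian group again has infinite order, so $g_i = f_i^{n_i} \in {\rm MW}(\varphi_i)$ is of infinite order; together with $\varphi_1 \ne \varphi_2$ this shows that $(X, \varphi_1, \varphi_2, g_1, g_2)$ again satisfies Set-up \ref{st21}. Proposition \ref{prop21}\,(3) then produces $f \in \langle g_1, g_2 \rangle$ with $h_{{\rm top}}(f) > 0$, and since ${\rm Ine}\,(C)$ is a subgroup of ${\rm Aut}\,(X)$ containing $g_1$ and $g_2$, this $f$ lies in ${\rm Ine}\,(C)$, as required.

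The only step that is more than bookkeeping is the third paragraph's first half: it rests on the explicit local structure of the fibers of type $I_b^*$, $II^*$, $III^*$, $IV^*$ — that $C$ is a smooth rational curve meeting its (at least three) neighbours transversally at distinct points and that $F_i$ has no triple points — so that fixing those neighbouring components forces $g_i$ to fix three distinct points of $C$. Once this local picture is in hand, the inheritance of Set-up \ref{st21} by $(g_1, g_2)$ and the appeal to Proposition \ref{prop21}\,(3) are routine.
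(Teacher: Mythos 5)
Your proposal is correct and follows essentially the same route as the paper: the paper passes to the finite-index kernel of the permutation representation of ${\rm MW}(\varphi_i)$ on the components of $F_i$ and picks an infinite-order element there, which is the same as your taking a suitable power $f_i^{n_i}$; both then use the three distinct intersection points on $C\simeq\P^1$ to land in ${\rm Ine}\,(C)$ and conclude via Proposition \ref{prop21}\,(3). No substantive difference.
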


\begin{proof} Let $C_{ij}$ ($1 \le j \le n_i$) be the set of irreducible components of $F_i$ ($i=1$, $2$). Then we have a natural group homomorphism 
$$\tau_i : {\rm MW}\, (\varphi_i) \to {\rm Aut}_{{\rm set}}(\{S_{ij}\}_{1 \le j \le n_i}) \simeq \Sigma_{n_i}\,\, .$$ 
Here $\Sigma_n$ is the symmetric group of $n$ letters. As $n_i$ is a natural number, ${\rm Ker}\, \tau_i$ is a finite index subgroup of ${\rm MW}(\varphi_i)$. Recall that ${\rm MW}\, (\varphi_i)$ ($i=1$, $2$) has an element of infinite order. Then there is $f_i \in {\rm Ker}\, \tau_i$ such that $f_i$ is of infinite order as well. By definition, we have $f_i(C_{ij}) = C_{ij}$ for all $j$. Thus $f_i(C) = C$ and $f_i(P_{i,k}) = P_{i, k}$ ($k = 1$, $2$, $3$). Here $P_{i,k} \in C$ ($k =1$, $2$, $3$) are the intersection points of $C$ with other (at least) three irreducible components of $F_i$. As $C \simeq \P^1$ and $P_{i, k}$ ($k =1$, $2$, $3$) are mutually distinct closed points on $C$, it follows that $f_i|_{C} = id_{C}$, i.e., $f_i \in {\rm Ine}\,(C)$. Hence 
$$\langle f_1, f_2 \rangle \subset {\rm Ine}\, (C)\,\, .$$
By Proposition \ref{prop21} (3), there is $f \in \langle f_1, f_2 \rangle$ such that $h_{{\rm top}}(f) > 0$. This $f$ satisfies all the requirements. 
\end{proof}
  
\section{Proof of Theorem \ref{thm1}}\label{sect3} 

In this section, we prove Theorem \ref{thm1}. 

Throughout this section, we denote by $S = {\rm Km}\, (E \times F)$ the Kummer surface associated to the product of two elliptic curves $E$ and $F$. 

Let $\{P_i\}_{i=1}^{4}$ (resp. $\{Q_i\}_{i=1}^{4}$) be the $2$-torsion subgroup of $F$ (resp. $E$). Then $S$ contains $24$ {\it visible} smooth rational curves. They are $8$ smooth rational curves $E_i$, $F_i$ ($1 \le i \le 4$) arizing from $8$ elliptic curves $E \times \{P_i\}$, $\{Q_i\} \times F$ on $E \times F$ and $16$ exceptional curves $C_{ij}$ over the $16$ singular points of type $A_1$ on the quotient surface $E \times F/\langle -id_{E \times F}\rangle$. We use the same notation as in \cite[Page 655]{Og89}. See Figure \ref{fig1} for the configuration of these $24$ visible smooth rational curves on $S$. 

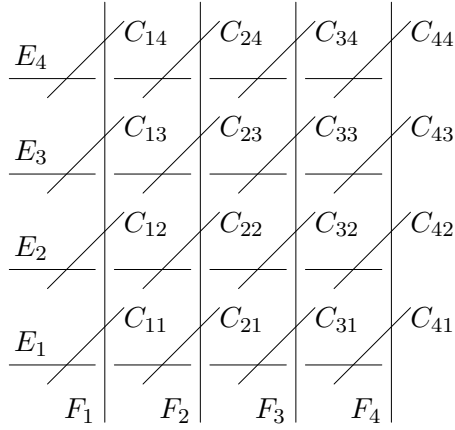
\begin{figure}
\unitlength 0.1in
\begin{picture}(25.000000,24.000000)(-1.000000,-23.500000)
\put(4.500000, -22.000000){\makebox(0,0)[rb]{$F_1$}}%
\put(9.500000, -22.000000){\makebox(0,0)[rb]{$F_2$}}%
\put(14.500000, -22.000000){\makebox(0,0)[rb]{$F_3$}}%
\put(19.500000, -22.000000){\makebox(0,0)[rb]{$F_4$}}%
\put(0.250000, -18.500000){\makebox(0,0)[lb]{$E_1$}}%
\put(0.250000, -13.500000){\makebox(0,0)[lb]{$E_2$}}%
\put(0.250000, -8.500000){\makebox(0,0)[lb]{$E_3$}}%
\put(0.250000, -3.500000){\makebox(0,0)[lb]{$E_4$}}%
\put(6.000000, -16.000000){\makebox(0,0)[lt]{$C_{11}$}}%
\put(6.000000, -11.000000){\makebox(0,0)[lt]{$C_{12}$}}%
\put(6.000000, -6.000000){\makebox(0,0)[lt]{$C_{13}$}}%
\put(6.000000, -1.000000){\makebox(0,0)[lt]{$C_{14}$}}%
\put(11.000000, -16.000000){\makebox(0,0)[lt]{$C_{21}$}}%
\put(11.000000, -11.000000){\makebox(0,0)[lt]{$C_{22}$}}%
\put(11.000000, -6.000000){\makebox(0,0)[lt]{$C_{23}$}}%
\put(11.000000, -1.000000){\makebox(0,0)[lt]{$C_{24}$}}%
\put(16.000000, -16.000000){\makebox(0,0)[lt]{$C_{31}$}}%
\put(16.000000, -11.000000){\makebox(0,0)[lt]{$C_{32}$}}%
\put(16.000000, -6.000000){\makebox(0,0)[lt]{$C_{33}$}}%
\put(16.000000, -1.000000){\makebox(0,0)[lt]{$C_{34}$}}%
\put(21.000000, -16.000000){\makebox(0,0)[lt]{$C_{41}$}}%
\put(21.000000, -11.000000){\makebox(0,0)[lt]{$C_{42}$}}%
\put(21.000000, -6.000000){\makebox(0,0)[lt]{$C_{43}$}}%
\put(21.000000, -1.000000){\makebox(0,0)[lt]{$C_{44}$}}%
\special{pa 500 2200}%
\special{pa 500 0}%
\special{fp}%
\special{pa 1000 2200}%
\special{pa 1000 0}%
\special{fp}%
\special{pa 1500 2200}%
\special{pa 1500 0}%
\special{fp}%
\special{pa 2000 2200}%
\special{pa 2000 0}%
\special{fp}%
\special{pa 0 1900}%
\special{pa 450 1900}%
\special{fp}%
\special{pa 550 1900}%
\special{pa 950 1900}%
\special{fp}%
\special{pa 1050 1900}%
\special{pa 1450 1900}%
\special{fp}%
\special{pa 1550 1900}%
\special{pa 1950 1900}%
\special{fp}%
\special{pa 0 1400}%
\special{pa 450 1400}%
\special{fp}%
\special{pa 550 1400}%
\special{pa 950 1400}%
\special{fp}%
\special{pa 1050 1400}%
\special{pa 1450 1400}%
\special{fp}%
\special{pa 1550 1400}%
\special{pa 1950 1400}%
\special{fp}%
\special{pa 0 900}%
\special{pa 450 900}%
\special{fp}%
\special{pa 550 900}%
\special{pa 950 900}%
\special{fp}%
\special{pa 1050 900}%
\special{pa 1450 900}%
\special{fp}%
\special{pa 1550 900}%
\special{pa 1950 900}%
\special{fp}%
\special{pa 0 400}%
\special{pa 450 400}%
\special{fp}%
\special{pa 550 400}%
\special{pa 950 400}%
\special{fp}%
\special{pa 1050 400}%
\special{pa 1450 400}%
\special{fp}%
\special{pa 1550 400}%
\special{pa 1950 400}%
\special{fp}%
\special{pa 200 2000}%
\special{pa 600 1600}%
\special{fp}%
\special{pa 200 1500}%
\special{pa 600 1100}%
\special{fp}%
\special{pa 200 1000}%
\special{pa 600 600}%
\special{fp}%
\special{pa 200 500}%
\special{pa 600 100}%
\special{fp}%
\special{pa 700 2000}%
\special{pa 1100 1600}%
\special{fp}%
\special{pa 700 1500}%
\special{pa 1100 1100}%
\special{fp}%
\special{pa 700 1000}%
\special{pa 1100 600}%
\special{fp}%
\special{pa 700 500}%
\special{pa 1100 100}%
\special{fp}%
\special{pa 1200 2000}%
\special{pa 1600 1600}%
\special{fp}%
\special{pa 1200 1500}%
\special{pa 1600 1100}%
\special{fp}%
\special{pa 1200 1000}%
\special{pa 1600 600}%
\special{fp}%
\special{pa 1200 500}%
\special{pa 1600 100}%
\special{fp}%
\special{pa 1700 2000}%
\special{pa 2100 1600}%
\special{fp}%
\special{pa 1700 1500}%
\special{pa 2100 1100}%
\special{fp}%
\special{pa 1700 1000}%
\special{pa 2100 600}%
\special{fp}%
\special{pa 1700 500}%
\special{pa 2100 100}%
\special{fp}%
\end{picture}%
 \caption{Curves $E_i$, $F_j$ and $C_{ij}$}
 \label{fig1}
\end{figure}

Thoughout this section, we set:
$$C := E_4 \subset S\,\, .$$
\begin{proposition}\label{prop31} $[{\rm Dec}\, (C) : {\rm Ine}\, (C)] = \infty$ and there is $f \in {\rm Ine}\, (C)$ such that $h_{{\rm top}}(f) > 0$. 
\end{proposition}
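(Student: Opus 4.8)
The plan is to apply Proposition~\ref{prop22} to $S = {\rm Km}\,(E\times F)$ with $C = E_4$, so the real content is to produce two different elliptic fibrations $\varphi_1, \varphi_2 : S \to \P^1$, each with an element of infinite order in its Mordell--Weil group, and each having a singular fiber of additive type ($I_b^*$, $II^*$, $III^*$ or $IV^*$) in which $E_4$ appears as the component meeting at least three others. The natural source of elliptic fibrations here is the two projections: $E\times F \to F$ and $E\times F \to E$ descend to pencils on $S$ whose general members are the images of the elliptic curves $E\times\{pt\}$ and $\{pt\}\times F$. Concretely, the first projection gives a fibration $\varphi_1$ whose general fiber is (the image in $S$ of) $E\times\{pt\}$; over the four $2$-torsion points $P_i\in F$ the fiber degenerates, and using Figure~\ref{fig1} one reads off that the chain $F_j$'s together with the $C_{ij}$'s and the relevant $E_i$ assemble into a fiber of type $I_0^*$ — the curve $E_4$ being precisely the central component of multiplicity $2$ meeting four others. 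Symmetrically, the second projection gives $\varphi_2$ with $I_0^*$ fibers in which the $F_j$ now play the central role; I need to arrange, possibly after composing with an automorphism of $S$ coming from a $2$-torsion translation or a correlation, that $E_4$ is again the central component of one of these $I_0^*$ fibers of $\varphi_2$. This is the step where the explicit geometry of the $24$ visible curves in Figure~\ref{fig1} must be used carefully: I want two genuinely different fibrations both exhibiting $E_4$ as an additive-type central component.

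Next I would verify the Mordell--Weil condition. For $\varphi_1$, sections correspond to multisections/points; the relevant observation is that the Kummer surface of $E\times F$ inherits from the abelian surface a large supply of automorphisms — translations by points of $(E\times F)[\,\cdot\,]$ do not directly descend, but the point is classical that ${\rm Km}\,(E\times F)$ carries elliptic fibrations with positive Mordell--Weil rank precisely because $\rho(S)$ is large (at least $18$) and each $I_0^*$-fibration has its trivial lattice of rank at most $2 + 4\cdot 4 = 18$ with equality failing generically, forcing ${\rm MW}$ to have positive rank whenever $\rho(S) > 2 + \sum(m_v - 1)$; alternatively one exhibits explicit sections from the curves $F_j$ or $E_i$ that are not torsion. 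Either way one gets $f_1 \in {\rm MW}(\varphi_1)$ and $f_2\in {\rm MW}(\varphi_2)$ of infinite order, so Set-up~\ref{st21} holds; the fibrations are different since the general fibers are the images of $E\times\{pt\}$ versus $\{pt\}\times F$, which are different points of $S$ in the scheme sense. Proposition~\ref{prop22} then immediately yields $f\in{\rm Ine}\,(C)$ with $h_{{\rm top}}(f)>0$, giving the second assertion of the proposition.

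For the first assertion, $[{\rm Dec}\,(C) : {\rm Ine}\,(C)] = \infty$, I would argue as follows. Proposition~\ref{prop22}'s construction already produces $\langle f_1, f_2\rangle \subset {\rm Ine}\,(C) \subset {\rm Dec}\,(C)$, so both groups are infinite; the issue is the index. One shows ${\rm Ine}\,(C)$ acts trivially on a suitable finite set of points of $C$ (the intersections with other components of the two fibers), hence any element of ${\rm Ine}\,(C)$ fixes $C$ pointwise, whereas ${\rm Dec}\,(C)$ surjects onto ${\rm Aut}\,(C)\cong{\rm PGL}_2$ restricted to a subgroup preserving those marked points — but more to the point, I would exhibit an explicit one-parameter-like family, or rather an infinite-order element, of ${\rm Dec}\,(C)$ whose restriction to $C\cong\P^1$ has infinite order: for instance, a translation in the Mordell--Weil group of a \emph{third} elliptic fibration in which $C$ appears as a (multi)section or as a component of multiplicative rather than additive type, so that $E_4$ is moved nontrivially along itself. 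The cyclic group generated by such an element injects into ${\rm Dec}\,(C)/{\rm Ine}\,(C)$, giving infinite index. The main obstacle, and where I would spend the most care, is the bookkeeping in Figure~\ref{fig1}: confirming that the two projection fibrations really do present $E_4$ as the central additive component (type $I_0^*$) and, for the index claim, locating a third structure on $S$ under which ${\rm Dec}\,(C)$ acts on $C$ with infinite image. Everything else — the entropy, the freeness, the density — is handed to us by Propositions~\ref{prop21} and~\ref{prop22}.
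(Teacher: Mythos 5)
Your overall strategy (feed Proposition~\ref{prop22} two fibrations exhibiting $E_4$ as the central component of an additive fiber, and separately exhibit an element of ${\rm Dec}\,(C)$ acting on $C$ with infinite order) is the right one, but the concrete fibrations you propose do not work, on two counts. First, $E_4$ is a fiber component of only \emph{one} of the two projection fibrations: in the fibration induced by $E\times F\to F$ it is indeed the multiplicity-$2$ central component of an $I_0^*$ fiber, but in the fibration induced by $E\times F\to E$ the curves $E_j$ are \emph{sections}, not fiber components, so the hypothesis of Proposition~\ref{prop22} fails for $\varphi_2$; composing with an automorphism of $S$ cannot repair this, since no automorphism carries $E_4$ to an $F_i$ when $E$ and $F$ are not isogenous. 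Second, and more fatally, your Mordell--Weil count is backwards: each projection fibration has four $I_0^*$ fibers, so its trivial lattice has rank $2+4\cdot 4=18$, while $\rho(S)=18$ exactly in the non-isogenous case (the main case of Theorem~\ref{thm1}); by Shioda--Tate the Mordell--Weil rank of the projection fibrations is then $0$, so they supply no infinite-order translations at all. The paper avoids both problems by abandoning the projections and instead using the two $IV^*$ pencils $D_1=F_1+2C_{14}+F_2+2C_{24}+F_3+2C_{34}+3E_4$ and $D_2=F_1+2C_{14}+F_2+2C_{24}+F_4+2C_{44}+3E_4$, which both contain $E_4$ as the multiplicity-$3$ central component, and for which $C_{11}$ and $C_{12}$ are visibly disjoint sections meeting the common component $F_1$; translation by their difference acts on $F_1\setminus C_{14}\cong\C$ by a nonzero shift and hence has infinite order in ${\rm MW}(\varphi_k)$.

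For the index claim your idea is closer to the mark but still unresolved: you would need to actually produce the ``third structure.'' The paper uses a third $IV^*$ fibration, $D_3=E_4+2C_{44}+E_3+2C_{43}+E_2+2C_{42}+3F_4$, in which $E_4$ is an \emph{end} component of multiplicity $1$; the sections $C_{14}$ and $C_{24}$ are disjoint and both meet $E_4$, so the corresponding Mordell--Weil translation preserves $E_4$ and acts on $E_4\setminus C_{44}\cong\C$ by a nonzero translation, hence restricts to an infinite-order automorphism of $C=E_4$ and has infinite order in ${\rm Dec}\,(C)/{\rm Ine}\,(C)$. Without an explicit such fibration (or an explicit infinite-order element of ${\rm Dec}\,(C)$ moving $C$ along itself), your argument for $[{\rm Dec}\,(C):{\rm Ine}\,(C)]=\infty$ remains a sketch.
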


\begin{proof}
Consider the following three divisors of Kodaira type $IV^*$ on $S$: 
$$D_1 := F_1 + 2C_{14} + F_2 + 2C_{24} + F_3 + 2C_{34} + 3E_4\,\, ,$$
$$D_2 := F_1 + 2C_{14} + F_2 + 2C_{24} + F_4 + 2C_{44} + 3E_4\,\, ,$$
$$D_3 := E_4 + 2C_{44} + E_3 + 2C_{43} + E_2 + 2C_{42} + 3F_4\,\, .$$
Then the linear system $|D_k|$ is a free pencil and defines an elliptic fibration 
$$\varphi_k : S \to \P^1$$
with a singular fiber $D_k$ of Kodaira type $IV^*$, for each $k =1$, $2$, $3$. 
Notice that the three fibrations $\varphi_k$ are mutually different and each $\varphi_k$ admits a section as we shall see. 

First, we show that $[{\rm Dec}\, (E_{4}) : {\rm Ine}\, (E_{4})] = \infty$. Recall that we set $C = E_{4}$. 

The curves $C_{14}$ and $C_{24}$ are disjoint sections of $\varphi_3$ meeting the same irreducible component $E_4$ of the fiber $D_3$ of $\varphi_3$. We choose $C_{14}$ as the zero section of $\varphi_3$. Then the section $C_{24}$ defines 
$f_3 \in {\rm MW}(\varphi_1)$, which preserves each irreducible component of $D_3$. In particular, $f_3(E_4) = E_4$. Note that 
$$E_4^{0} := E_4 \setminus C_{44} = \C\,\, .$$ 
Then, by \cite[Table 1, Page 604]{Ko63}, if we set
$$0 := C_{14} \cap E_4^{0} \subset E_4^{0} = \C\,\, ,$$
$$a := C_{24} \cap E_4^{0} \subset E_4^{0} = \C\,\, ,$$
then $f_3(E_4) = E_4$ and $f_3$ acts on $E_4^0 = \C$ by the addition $x \mapsto x+a$. As $C_{14} \cap C_{24} = \emptyset$, it follows that $a \not= 0$, hence $f_3$ is of infinite order on $E_4$. As $f_3 \in {\rm Dec}\, (E_{4})$ and $f_3|_{E_4}$ is of infinite order on $E_4$, it follows that the class $f_3$ is of infinite order in the quotient group ${\rm Dec}\, (E_{4})/{\rm Ine}\, (E_{4})$. Hence $[{\rm Dec}\, (E_{4}) : {\rm Ine}\, (E_{4})] = \infty$.

Next we show that there is $f \in {\rm Ine}\, (C)$ such that $h_{{\rm top}}(f) > 0$. 

Note that $C = E_4$ is the irreducible component of both $D_1$ and $D_2$ meeting three other irreducible components of both $D_1$ and $D_2$. Note also that $C_{11}$ and $C_{12}$ are disjoint section of both $\varphi_k$ ($k=1$, $2$), meeting the same irreducible component $F_1$ of the fiber $D_k$. Thus, for the same reason above, $C_{11}$ and $C_{12}$ define the element $f_k \in {\rm MW}\, (\varphi_k)$ of infinite order for each $k =1$ and $2$. As $C = E_{4}$ is the irreducible component of both $D_k$ ($k=1$, $2$) meeting three other irreducible components of $D_k$, there is then $f \in {\rm Ine}\, (E_4)$ such that $h_{{\rm top}}(f) > 0$ by Proposition \ref{prop22}. 
\end{proof}

The following proposition completes the proof of Theorem \ref{thm1}: 
\begin{proposition}\label{prop32} Let $S = {\rm Km}\, (E \times F)$ and $\P^1 \simeq C = E_4 \subset S$ as before. Assume that the elliptic curves $E$ and $F$ are not isogenous. Then $[{\rm Aut}\, (S) : {\rm Dec}\, (C)] < \infty$. 
\end{proposition}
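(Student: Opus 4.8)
The plan is to exploit the well-understood structure of the automorphism group of $S = {\rm Km}\,(E \times F)$ when $E$ and $F$ are non-isogenous, together with the fact that the curve $C = E_4$ is one of the $24$ visible rational curves whose configuration is essentially preserved by all automorphisms. Concretely, $\operatorname{NS}(S)$ has rank $18$ in this case (since $T(E \times F)$ has rank $4$ and being non-isogenous forces $\rho = 20 - \operatorname{rank}\mathrm{Hom}(E,F) \cdot 2 = 18$; alternatively cite the explicit description of $\operatorname{NS}(\mathrm{Km}(E\times F))$, e.g.\ Oguiso \cite{Og89}), and $\operatorname{NS}(S)$ is generated over $\Q$ by the classes of the $24$ visible curves. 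The first step is to recall (from \cite{Og89} or Keum--Kondo type results) that $\operatorname{Aut}(S)$ acts on $\operatorname{NS}(S)$ with finite kernel (indeed trivial kernel, since a K3 surface has no automorphism acting trivially on $\operatorname{NS}$ other than involutions, and here one can pin it down), and that the image is, up to finite index, the subgroup of $\mathrm{O}^+(\operatorname{NS}(S))$ preserving the "Kummer structure."

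Second, I would use the fact that the finitely many $(-2)$-curves among the visible curves are characterized numerically: the set of classes of the $24$ visible $(-2)$-curves is the union of finitely many $\operatorname{Aut}(S)$-orbits, and in fact one shows using the Weyl group / chamber-geometry argument that $\operatorname{Aut}(S)$ permutes the set of all $(-2)$-curves, and that among these the visible ones form a distinguished finite subset (for instance because they are exactly the irreducible curves of a fixed numerical type, or because the configuration graph of the $24$ curves is rigid). Hence there is a natural homomorphism $\rho\colon \operatorname{Aut}(S) \to \operatorname{Sym}(\mathcal{C})$ where $\mathcal{C}$ is the set of $24$ visible curves (or the relevant finite invariant subset containing $E_4$), and $\operatorname{Dec}(C) = \operatorname{Dec}(E_4)$ contains the preimage of the stabilizer of $E_4$. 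Since $\operatorname{Sym}(\mathcal{C})$ is finite, this stabilizer has finite index in $\operatorname{Aut}(S)$, giving $[\operatorname{Aut}(S) : \operatorname{Dec}(C)] < \infty$.

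The main obstacle, and the place where the non-isogeny hypothesis is really used, is establishing that the set of $24$ visible curves (or some finite superset of it containing $E_4$) is genuinely $\operatorname{Aut}(S)$-invariant. When $E$ and $F$ are isogenous (or CM) there can be "extra" automorphisms coming from the extra Néron--Severi classes that do not preserve the Kummer configuration, so one must invoke the precise computation of $\operatorname{NS}(S)$, $T(S)$, and $\operatorname{Aut}(S)$ in the non-isogenous case. I would cite the explicit determination of $\operatorname{Aut}(\mathrm{Km}(E\times F))$ for generic non-isogenous $E, F$ (or argue directly: $T(S) \cong U(2)^{\oplus 2}$ up to scaling, the discriminant form and the embedding $\operatorname{NS}(S)\hookrightarrow H^2(S,\Z)$ are then rigid enough that the natural map $\operatorname{Aut}(S)\to \mathrm{O}(\operatorname{NS}(S))/W$ has finite cokernel and the double-curve configuration is intrinsic), and then deduce invariance of the finite set containing $C$.

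Finally, I would assemble the pieces: finiteness of $\ker(\operatorname{Aut}(S)\to \mathrm{O}(\operatorname{NS}(S)))$, finiteness of the index of the stabilizer of the class $[E_4]$ inside the image (because $[E_4]$ lies in a finite $\operatorname{Aut}(S)$-orbit of $(-2)$-classes), and the fact that fixing the class $[E_4]$ together with its numerical neighbours forces $f(E_4) = E_4$ as a curve (an automorphism sending $[E_4]$ to itself sends the irreducible curve $E_4$ to the unique irreducible $(-2)$-curve in that class). Combining, $[\operatorname{Aut}(S):\operatorname{Dec}(E_4)] < \infty$, which together with Proposition \ref{prop31} completes the proof of Theorem \ref{thm1}.
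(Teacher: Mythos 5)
There is a genuine gap at the step you yourself flag as ``the main obstacle'': you never actually produce a finite ${\rm Aut}\,(S)$-invariant set of curves containing $E_4$, and your proposed candidate --- the $24$ visible curves --- is in fact \emph{not} invariant. Proposition \ref{prop31} already exhibits Mordell--Weil translations of infinite order (e.g.\ $f_3$, translation by the section $C_{24}$ of $\varphi_3$); such a translation sends the zero section $C_{14}$ to $C_{24}$, sends $C_{24}$ to the section corresponding to $2P$, and so on, producing an infinite orbit of $(-2)$-curves which cannot stay inside a $24$-element set. For the same reason the fallback justifications do not work: all smooth rational curves on a K3 surface have the same ``numerical type'' (self-intersection $-2$), there are infinitely many of them on this $S$, and the assertion that $[E_4]$ lies in a finite ${\rm Aut}\,(S)$-orbit is precisely equivalent to the statement being proved, so invoking it is circular. (A smaller quibble: your formula $\rho = 20 - 2\,{\rm rank}\,{\rm Hom}(E,F)$ is wrong; the correct statement is $\rho({\rm Km}\,(E\times F)) = 18 + {\rm rank}\,{\rm Hom}(E,F)$, which does give $18$ in the non-isogenous case.)

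The paper closes exactly this gap with a specific mechanism, and it is where the non-isogeny hypothesis enters. The automorphism $\iota$ of $S$ induced by $(-{\rm id}_E, {\rm id}_F)$ is an involution whose fixed locus is $S^{\iota} = \bigcup_{i=1}^{4} E_i \cup \bigcup_{i=1}^{4} F_i$, and by \cite[Lemma 1.4]{Og89} the non-isogeny of $E$ and $F$ forces $\iota$ to lie in the \emph{center} of ${\rm Aut}\,(S)$. Centrality immediately makes $S^{\iota}$ an ${\rm Aut}\,(S)$-stable set, so one gets a permutation representation $\tau : {\rm Aut}\,(S) \to \Sigma_8$ on the $8$ curves $\{E_i, F_i\}$; since $\Sigma_8$ is finite and ${\rm Ker}\,\tau \subset {\rm Dec}\,(E_4)$, the finite-index claim follows. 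Note that this invariant set is the $8$ curves $E_i, F_i$ only --- not the $16$ exceptional curves $C_{ij}$ --- which is consistent with the fact that the $C_{ij}$ genuinely do get moved around by Mordell--Weil translations. If you want to repair your argument, you should replace the appeal to rigidity of the $24$-curve configuration by this central-involution argument (or some equivalent way of exhibiting a finite invariant set through $E_4$); the lattice-theoretic preliminaries about ${\rm NS}\,(S)$ and the kernel of ${\rm Aut}\,(S) \to {\rm O}({\rm NS}\,(S))$ are then unnecessary.
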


\begin{proof} Consider the element $\iota \in {\rm Aut}\, (S)$ of order $2$, induced by the element $(-id_E, id_F) \in {\rm Aut}\, (E \times F)$. As $E$ and $F$ are not isogenous, it follows from \cite[Lemma 1.4]{Og89} that $\iota$ is in the center of ${\rm Aut}\, (S)$ and the pointwisely fixed locus of $\iota$ is 
$$S^{\iota} = \cup_{i=1}^{4} E_i \cup \cup_{i=1}^{4} F_i\,\, .$$ 
Thus $S^{\iota}$ is preserved by ${\rm Aut}\, (S)$ and we obtain a natural group homomorphism 
$$\tau : {\rm Aut}\, (S) \to {\rm Aut}_{{\rm set}}(\{E_i, F_i\}_{i=1}^{4}) \simeq \Sigma_8\,\, .$$
Here, as before $\Sigma_n$ is the symmetric group of $n$ letters. As $\Sigma_8$ is a finite group, it follows that $[{\rm Aut}\, (S) : {\rm Ker}\, \tau] < \infty$. By definition of ${\rm Dec}\, (E_4)$, we have 
$${\rm Ker}\, \tau < {\rm Dec}\, (E_4) < {\rm Aut}\, (S)\,\, .$$ 
Thus $[{\rm Aut}\, (S) : {\rm Dec}\, (E_4)] < \infty$ as well. 
\end{proof}
\begin{remark}\label{rem32}
Let $S := {\rm Km}\, (E \times E)$. Then $[{\rm Aut}\, (S) : {\rm Dec}\, (C)] = \infty$ ($C = E_4$). This is observed as follows. 
Let $\Delta \in {\rm Aut}\, (E \times E)$ defined by $(x, y) \mapsto (x, y+x)$. Consider the automorphism $\delta \in {\rm Aut}\, (S)$ induced by $\Delta$. Then $\delta$ is of infinite order but $\delta^n(E_4) = E_4$ only if $n = 0$. 
\end{remark}  
\begin{remark}\label{rem31} 
A projective K3 surface $V$ is called {\it $2$-elementary} if $V$ has an automorphism $\iota$ of order $2$ such that $\iota^*\omega_V  = -\omega_V$, or equivalently $\iota^*|_{T(S)} = -id_{T(V)}$, and $\iota^* = id$ on ${\rm NS}\, (V)$. The name comes from the fact that the discriminant group ${\rm NS}\, (V)^*/{\rm NS}\, (V)$ is then isomorphic to some $2$-elemenray group $(\Z/2)^{\oplus a(V)}$. $2$-elemenray K3 surfaces are intensively studied by Nikulin (\cite{Ni81}). The involution $\iota$ is in the center of ${\rm Aut}\, (V)$ and the fixed locus of $\iota$ is preserved under ${\rm Aut}\, (V)$. By the classification in \cite{Ni81}, if a $2$-elementary K3 surface has a smooth rational curve $C$ and an automorphism of positive entropy, then $V^{\iota}$ has to be either empty or consists of smooth rational curves (which are necessarily disjoint). The second case occurs exactly when $\rho(V) + a(V) = 22$ by \cite{Ni81}. 
Kummer surfaces ${\rm Km}\, (E \times F)$ of non-isogenous $E$ and $F$ and K3 surfaces of classical Coble type are two special cases of $2$-elementary K3 surfaces with $(\rho(V), a(V)) = (18, 4)$, $(11, 11)$ respectively. It may be interesting to check if any $2$-elementary K3 surface with $\rho(V) + a(V) = 22$ has a smooth rational curve $C$ whose inertia group has an automorphism of positive entropy or not. See also Remark \ref{rem41}. 
\end{remark}

\section{Proof of Theorems \ref{thm2} and \ref{cor1}}\label{sect4} 

In this section, we show Theorems \ref{thm2} and \ref{cor1}. 

Throughout this section, $\zeta_3 := (-1 + \sqrt{-3})/2$ is a primitive 3rd root of unity, $E = E_{3}$ is the ellitic curve of period $\zeta_3$ and $S = S_3$ is the minimal resolution of the quotient surface $(E \times E)/\langle (\zeta_3, \zeta_3^2) \rangle$. Then $S$ is a projective K3 surface with $24$ {\it visible} smooth rational curves $F_i$, $G_i$ ($i = 1$, $2$, $3$), $E_{ij}$, $E_{ij}'$ ($i =1$, $2$, $3$, $j=1$, $2$, $3$) whose configuration is in Figure \ref{fig2}. We follow \cite[Page 1281]{OZ96} for the notation of these $24$ curves. 

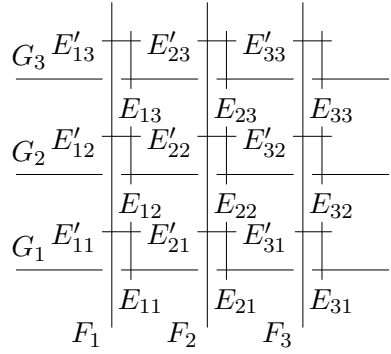
\begin{figure}
\unitlength 0.1in
\begin{picture}(25.000000,24.000000)(-1.000000,-23.500000)
\put(4.500000, -23.000000){\makebox(0,0)[rb]{$F_1$}}%
\put(9.500000, -23.000000){\makebox(0,0)[rb]{$F_2$}}%
\put(14.500000, -23.000000){\makebox(0,0)[rb]{$F_3$}}%

\put(-0.250000, -18.500000){\makebox(0,0)[lb]{$G_1$}}%
\put(-0.250000, -13.500000){\makebox(0,0)[lb]{$G_2$}}%
\put(-0.250000, -8.500000){\makebox(0,0)[lb]{$G_3$}}%

\put(3.000000, -16.500000){\makebox(0,0)[bt]{$E_{11}'$}}%
\put(3.000000, -11.500000){\makebox(0,0)[bt]{$E_{12}'$}}%
\put(3.000000, -6.500000){\makebox(0,0)[bt]{$E_{13}'$}}%

\put(8.000000, -16.500000){\makebox(0,0)[bt]{$E_{21}'$}}%
\put(8.000000, -11.500000){\makebox(0,0)[bt]{$E_{22}'$}}%
\put(8.000000, -6.500000){\makebox(0,0)[bt]{$E_{23}'$}}%

\put(13.000000, -16.500000){\makebox(0,0)[bt]{$E_{31}'$}}%
\put(13.000000, -11.500000){\makebox(0,0)[bt]{$E_{32}'$}}%
\put(13.000000, -6.500000){\makebox(0,0)[bt]{$E_{33}'$}}%

\put(6.500000, -20.000000){\makebox(0,0)[bt]{$E_{11}$}}%
\put(6.500000, -15.000000){\makebox(0,0)[bt]{$E_{12}$}}%
\put(6.500000, -10.000000){\makebox(0,0)[bt]{$E_{13}$}}%

\put(11.500000, -20.000000){\makebox(0,0)[bt]{$E_{21}$}}%
\put(11.500000, -15.000000){\makebox(0,0)[bt]{$E_{22}$}}%
\put(11.500000, -10.000000){\makebox(0,0)[bt]{$E_{23}$}}%

\put(16.500000, -20.000000){\makebox(0,0)[bt]{$E_{31}$}}%
\put(16.500000, -15.000000){\makebox(0,0)[bt]{$E_{32}$}}%
\put(16.500000, -10.000000){\makebox(0,0)[bt]{$E_{33}$}}%

\special{pa 500 2200}%
\special{pa 500 500}%
\special{fp}%
\special{pa 1000 2200}%
\special{pa 1000 500}%
\special{fp}%
\special{pa 1500 2200}%
\special{pa 1500 500}%
\special{fp}%

\special{pa 0 1900}%
\special{pa 450 1900}%
\special{fp}%
\special{pa 550 1900}%
\special{pa 950 1900}%
\special{fp}%
\special{pa 1050 1900}%
\special{pa 1450 1900}%
\special{fp}%
\special{pa 1550 1900}%
\special{pa 1950 1900}%
\special{fp}%
\special{pa 0 1400}%
\special{pa 450 1400}%
\special{fp}%
\special{pa 550 1400}%
\special{pa 950 1400}%
\special{fp}%
\special{pa 1050 1400}%
\special{pa 1450 1400}%
\special{fp}%
\special{pa 1550 1400}%
\special{pa 1950 1400}%
\special{fp}%
\special{pa 0 900}%
\special{pa 450 900}%
\special{fp}%
\special{pa 550 900}%
\special{pa 950 900}%
\special{fp}%
\special{pa 1050 900}%
\special{pa 1450 900}%
\special{fp}%
\special{pa 1550 900}%
\special{pa 1950 900}%
\special{fp}%

\special{pa 450 700}%
\special{pa 650 700}%
\special{fp}%
\special{pa 950 700}%
\special{pa 1150 700}%
\special{fp}%
\special{pa 1450 700}%
\special{pa 1650 700}%
\special{fp}%

\special{pa 450 1700}%
\special{pa 650 1700}%
\special{fp}%
\special{pa 950 1700}%
\special{pa 1150 1700}%
\special{fp}%
\special{pa 1460 1700}%
\special{pa 1650 1700}%
\special{fp}%

\special{pa 450 1200}%
\special{pa 650 1200}%
\special{fp}%
\special{pa 950 1200}%
\special{pa 1150 1200}%
\special{fp}%
\special{pa 1450 1200}%
\special{pa 1650 1200}%
\special{fp}%

\special{pa 600 1650}%
\special{pa 600 1950}%
\special{fp}%
\special{pa 600 1150}%
\special{pa 600 1450}%
\special{fp}%
\special{pa 600 650}%
\special{pa 600 950}%
\special{fp}%

\special{pa 1100 1650}%
\special{pa 1100 1950}%
\special{fp}%
\special{pa 1100 1150}%
\special{pa 1100 1450}%
\special{fp}%
\special{pa 1100 650}%
\special{pa 1100 950}%
\special{fp}%

\special{pa 1600 1650}%
\special{pa 1600 1950}%
\special{fp}%
\special{pa 1600 1150}%
\special{pa 1600 1450}%
\special{fp}%
\special{pa 1600 650}%
\special{pa 1600 950}%
\special{fp}%

\end{picture}%
 \caption{Curves $F_i$, $G_j$, $E_{ij}$ and $E_{ij}'$}
 \label{fig2}
\end{figure}

Throughout this section, we also denote by $g = g_3$ the automorphism of $S$ of order $3$, induced by the automorphism $\tilde{g} \in {\rm Aut}\, (E \times E)$ defined by 
$$\tilde{g}^{*}(x, y) = (x, \zeta_3 y)\,\, .$$ 
Here $(x, y)$ are the standard coordinates of the universal covering space $\C \times \C$ of $E \times E$. Then by an explicit computation or by using \cite[Example 1, Theorem 3, Lemma 2.3]{OZ96}, we have the following:
\begin{proposition}\label{prop41} The pair $(S,g)$ satisfies the following:

\begin{enumerate} 
\item Each of the $24$ visible smooth rational curves above, say $D$, is preserved by $g$, i.e., $g \in {\rm Dec}\, (D)$. Moreover the fixed locus $X^g$ consists of $6$ smooth rational curves $F_i$, $G_i$ and $9$ isolated points $P_{ij} = E_{ij} \cap E_{ij}'$ ($1 \le i, j \le 3$). 
\item $\rho(S) = 20$, $g^* = id$ on ${\rm NS}\, (S)$ and $g^*\omega_S = \zeta_3 \omega_S$. 
\end{enumerate}
\end{proposition}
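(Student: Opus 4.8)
The plan is to read everything off the presentation of $S=S_3$ as the minimal resolution $r\colon S\to (E\times E)/\langle\tau_3\rangle$, together with the elliptic fibration $\varphi\colon S\to\P^1$ induced by the first projection $E\times E\to E$ (after dividing the base by the induced $\zeta_3$-action). Write $q\colon E\times E\to (E\times E)/\langle\tau_3\rangle$ for the quotient map. The $\tau_3$-fixed locus of $E\times E$ is the set of nine points $(a_i,a_j)$, $1\le i,j\le 3$, where $a_1,a_2,a_3$ are the three points of $E$ with $\zeta_3 a=a$; their images are rational double points of type $A_2$. Following \cite[Example 1, Theorem 3, Lemma 2.3]{OZ96} I would identify the $24$ visible curves with the three proper transforms $F_i$ of $q(\{a_i\}\times E)$, the three proper transforms $G_j$ of $q(E\times\{a_j\})$, and the $18$ exceptional curves $E_{ij},E_{ij}'$ of $r$ over $q(a_i,a_j)$, arranged so that the three singular fibres of $\varphi$ are of Kodaira type $IV^*$ with arms $F_i - E_{ij} - E_{ij}'$ (a section of $\varphi$, e.g.\ $G_j$, meeting the end $E_{ij}'$), and symmetrically the $G_j$ are the central components of the three $IV^*$-fibres of the fibration induced by the second projection. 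In this picture $P_{ij}=E_{ij}\cap E_{ij}'$ is the node in the middle of an $IV^*$-arm, so it lies on no $F_i$ or $G_j$.

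Next I would record the elementary facts about $g$. Since $\tilde g\colon(x,y)\mapsto(x,\zeta_3 y)$ commutes with $\tau_3$, it descends to $(E\times E)/\langle\tau_3\rangle$ and lifts canonically to the automorphism $g$ of the minimal resolution $S$; it has order $3$ because $\tilde g^3=\mathrm{id}$ while $\tilde g\notin\langle\tau_3\rangle$. The key remark is that $\tilde g$ and $\tilde g'\colon(x,y)\mapsto(\zeta_3 x,y)$ differ by $\tau_3$, namely $\tilde g'=\tau_3\circ\tilde g$, so they induce the same automorphism $g$ downstairs. Because $\zeta_3 a_i=a_i$, the map $\tilde g$ fixes $E\times\{a_j\}$ pointwise and $\tilde g'$ fixes $\{a_i\}\times E$ pointwise; passing to $S$ this shows that $g$ fixes each of the six curves $F_i,G_j$ pointwise. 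Moreover $\tilde g$ fixes each $\tau_3$-fixed point $(a_i,a_j)$, hence $g$ fixes each of the nine singular points of the quotient and therefore preserves each exceptional curve $E_{ij},E_{ij}'$ and each point $P_{ij}$. In particular $g\in{\rm Dec}\,(D)$ for every visible curve $D$, which is the first assertion of (1). To finish (1) I would determine $S^g$ exactly: since $g$ restricts to the identity on $F_i$ and $F_i$ meets $E_{ij}$, $g$ cannot restrict to the identity on $E_{ij}$ (otherwise $g$ would be the identity near $F_i\cap E_{ij}$, hence on the connected surface $S$); thus $g$ acts on the rational curve $E_{ij}$ as an automorphism of order $3$, which has exactly two fixed points, and these are forced to be $P_{ij}$ and the point $F_i\cap E_{ij}$ (lying on $F_i$). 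The same argument with $G_j$ in place of $F_i$ gives the two fixed points of $g$ on $E_{ij}'$ as $P_{ij}$ and a point of $G_j$. Since the $P_{ij}$ lie on no $F_i$ or $G_j$, this yields $S^g=\bigsqcup_i F_i\sqcup\bigsqcup_j G_j\sqcup\{P_{ij}\}_{1\le i,j\le 3}$, as claimed.

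For (2), the Shioda-Tate formula applied to $\varphi$, whose three fibres of type $IV^*$ have seven components each, gives $\rho(S)=2+3\cdot 6+\rank{\rm MW}\,(\varphi)=20+\rank{\rm MW}\,(\varphi)$; since $\rho(S)\le 20$ for any complex K3 surface, $\rho(S)=20$ and $\rank{\rm MW}\,(\varphi)=0$. Consequently the components of the three $IV^*$-fibres together with one section---all of them among the $24$ visible curves---generate a rank-$20$ sublattice of ${\rm NS}\,(S)$, so the classes of the $24$ visible curves span ${\rm NS}\,(S)_{\Q}$; as $g$ fixes each of these classes, $g^*=\mathrm{id}$ on ${\rm NS}\,(S)_{\Q}$, hence on ${\rm NS}\,(S)$. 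Finally, the $\tau_3$-invariant holomorphic $2$-form $dx\wedge dy$ on $E\times E$ descends to a nonzero multiple of $\omega_S$ (it extends across the canonical singularities), and $\tilde g^*(dx\wedge dy)=dx\wedge d(\zeta_3 y)=\zeta_3\,dx\wedge dy$, so $g^*\omega_S=\zeta_3\,\omega_S$.

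The only genuinely delicate ingredient is the combinatorial input used throughout: that over each node $q(a_i,a_j)$ the two exceptional curves occupy the middle positions $F_i - E_{ij} - E_{ij}' - G_j$ of a $IV^*$-fibre (so that $P_{ij}$ is an isolated fixed point rather than a point of some $F_i$ or $G_j$), and that the $IV^*$-fibres of the two fibrations exhaust the $24$ visible curves. I expect this to be the main obstacle in the sense that a purely numerical bookkeeping (e.g.\ the topological Lefschetz number $e(S^g)=1+\mathrm{tr}(g^*|H^2)+1 = 21$, using $g^*=\mathrm{id}$ on ${\rm NS}$ and eigenvalues $\zeta_3,\zeta_3^2$ on the rank-$2$ transcendental lattice) does not by itself distinguish the correct configuration from spurious ones; one really needs the resolution geometry, which is exactly the content of \cite[Example 1, Theorem 3, Lemma 2.3]{OZ96} (or a direct by-hand resolution of the $A_2$-points). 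Once that is available, the argument above is routine.
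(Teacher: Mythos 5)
The paper does not actually prove Proposition \ref{prop41}: it asserts it ``by an explicit computation or by using \cite[Example 1, Theorem 3, Lemma 2.3]{OZ96}''. Your proposal carries out that explicit computation, and almost all of it is correct: the identification of the $24$ visible curves, the observation that $\tilde g$ and $\tilde g'=\tau_3\circ\tilde g\colon(x,y)\mapsto(\zeta_3x,y)$ induce the same $g$ and hence that $g$ fixes all six curves $F_i,G_j$ pointwise, the Shioda--Tate computation giving $\rho(S)=20$ and $g^*=\id$ on ${\rm NS}(S)$, and the descent of $dx\wedge dy$ giving $g^*\omega_S=\zeta_3\omega_S$ are all sound. (Your labelling of which exceptional curve in each $A_2$-chain touches $F_i$ and which touches $G_j$ is the opposite of the paper's Figure~\ref{fig2}, but that is immaterial; and your step ``$g$ preserves each of $E_{ij},E_{ij}'$'' should be justified by noting that only one of the two meets $F_i$, which you implicitly use.)

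There is one genuine, if small, gap: your determination of $S^g$ only analyzes fixed points lying on the $24$ visible curves. To conclude that $S^g$ \emph{consists of} the six curves and the nine points $P_{ij}$ you must also rule out fixed points (and fixed curves) in the complement, i.e.\ on the open part of $(E\times E)/\langle\tau_3\rangle$. This is the same kind of computation you already use: a point $q(x,y)$ is $g$-fixed iff $\tilde g(x,y)\in\langle\tau_3\rangle\cdot(x,y)$, and the three cases $(x,\zeta_3y)=(x,y)$, $=\tau_3(x,y)$, $=\tau_3^2(x,y)$ force respectively $y=a_j$, $(x,y)=(a_i,a_j)$, $x=a_i$ --- so nothing new appears off the curves $\{a_i\}\times E$ and $E\times\{a_j\}$ and the nine nodes. (The Lefschetz count $e(S^g)=2+\mathrm{tr}(g^*|H^2)=21=6\cdot 2+9$ you mention is a consistency check but, as you say, does not by itself exclude extra components.) With that one line added, your argument is complete; also, your claim that $g$ cannot be the identity on $E_{ij}$ deserves the explicit reason that a finite-order automorphism fixing two transversal curves through a point pointwise is linearizable with trivial differential there, hence the identity.
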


Set ${\rm Aut}^0\, (S) := \{f \in {\rm Aut}\, (S)\, |\, f^*\omega_S = \omega_S\}$. As $S$ is projective, the pluricanonical representation of ${\rm Aut}\, (S)$ is of finite order (\cite[Theorem 14.10]{Ue75}). Thus ${\rm Aut}^0\, (S)$ is a normal subgroup of ${\rm Aut}\, (S)$ of finite index. 

From now until the end of this section, we set
$$C = G_3 \simeq \P^1\,\, .$$
We show that $C$ satisfies the requirements in Theorem \ref{thm2}. 
 
\begin{proposition}\label{prop42}
$[{\rm Aut}\,(S) : {\rm Dec}\, (C)] < \infty$. 
\end{proposition}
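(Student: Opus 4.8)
The plan is to exhibit a finite-index subgroup of $\mathrm{Aut}\,(S)$ that lands inside $\mathrm{Dec}\,(C)$, exactly as in the proof of Proposition~\ref{prop32} and Remark~\ref{rem31}: find an $\mathrm{Aut}\,(S)$-invariant finite set of curves containing $C = G_3$, and use the induced permutation action to produce a subgroup of finite index that stabilizes $C$. The natural candidate invariant set is the collection of the $6$ smooth rational curves $F_i$, $G_i$ ($i=1,2,3$) appearing in the fixed locus of $g$ in Proposition~\ref{prop41}(1), together possibly with the $9$ isolated fixed points $P_{ij}$ — but since $g$ is central in $\mathrm{Aut}\,(S)$ up to finite index (see below), its fixed locus is $\mathrm{Aut}^0(S)$-invariant, hence invariant under a finite-index subgroup of $\mathrm{Aut}\,(S)$.

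First I would argue that $g = g_3$ lies in the center of $\mathrm{Aut}^0(S)$, or at least that $\langle g \rangle$ is normalized by a finite-index subgroup. The mechanism: by Proposition~\ref{prop41}(2) we have $g^* = \mathrm{id}$ on $\mathrm{NS}(S)$ and $g^*\omega_S = \zeta_3\omega_S$, and since $\rho(S) = 20$ the transcendental lattice $T(S)$ has rank $2$, on which $g^*$ acts through a faithful character of order $3$. Any $f \in \mathrm{Aut}\,(S)$ acts on $T(S)$ commuting with $g^*$ up to the obvious conjugation, and because $\mathrm{O}(T(S))$ with $T(S)$ of rank $2$ and signature $(2,0)$ is abelian (cyclic or dihedral of small order), $f^* g^* f^{*-1}$ equals $g^*$ or $(g^*)^{-1}$ on $T(S)$; combined with $g^* = \mathrm{id}$ on $\mathrm{NS}(S)$ and the faithfulness of the action of $\mathrm{Aut}\,(S)$ on $H^2(S,\Z)$ (Torelli), this shows $f g f^{-1} \in \{g, g^{-1}\}$. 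Hence $\langle g\rangle$ is normal in $\mathrm{Aut}\,(S)$, and the subgroup fixing $g$ (equivalently, $\mathrm{Aut}^0(S)$) has index at most $2$ in $\mathrm{Aut}\,(S)$ once we account for whether $f^*\omega_S$ is a cube root of unity. In any case a finite-index subgroup $H \le \mathrm{Aut}\,(S)$ centralizes $g$ and therefore preserves $S^g$.

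Next, $S^g = F_1 \cup F_2 \cup F_3 \cup G_1 \cup G_2 \cup G_3 \cup \{P_{ij}\}$ is the disjoint union of its $1$-dimensional part (the six rational curves) and its finitely many isolated points, and each part is preserved by $H$. Restricting the action of $H$ to the set $\{F_i, G_i\}_{i=1,\dots,3}$ of six curves gives a homomorphism $H \to \Sigma_6$; its kernel $K$ has finite index in $H$, hence in $\mathrm{Aut}\,(S)$, and satisfies $K \le \mathrm{Dec}\,(G_3) = \mathrm{Dec}\,(C)$ by definition. Therefore $[\mathrm{Aut}\,(S) : \mathrm{Dec}\,(C)] \le [\mathrm{Aut}\,(S):K] < \infty$, as desired.

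The main obstacle is the normality/centrality of $g$: one must be careful that $g$ is intrinsically characterized (not just an ad hoc automorphism), and the cleanest route is the Hodge-theoretic one above using $\rho(S) = 20$ and $\mathrm{rank}\, T(S) = 2$, so that the action on $T(S)$ factors through an abelian group and conjugation can only send $g^*$ to $g^{\pm 1}$. An alternative, perhaps what the author intends, is to quote \cite{OZ96} directly for the structure of $\mathrm{Aut}\,(S)$ or for the statement that the $24$ visible curves (or the six fixed curves) form an $\mathrm{Aut}\,(S)$-stable configuration; with that in hand the permutation-kernel argument is immediate and the proof is a verbatim analogue of Proposition~\ref{prop32}.
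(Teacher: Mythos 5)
Your proposal is correct and follows essentially the same strategy as the paper: show that the fixed locus $S^{g}$, hence the set of six curves $\{F_i,G_i\}_{i=1}^{3}$, is invariant under a finite-index subgroup of ${\rm Aut}\,(S)$, and then take the kernel of the resulting permutation representation into $\Sigma_6$, which lands in ${\rm Dec}\,(G_3)$. The only divergence is in how that invariance is obtained: the paper first passes to ${\rm Aut}^0(S)$, which has finite index by the finiteness of the pluricanonical representation \cite{Ue75}, and shows every $f\in{\rm Aut}^0(S)$ commutes with $g$ because $f^*=\id$ on the rank-$2$ lattice $T(S)$ and $g^*=\id$ on ${\rm NS}\,(S)$ (then Torelli); you instead prove $\langle g\rangle$ is normal in all of ${\rm Aut}\,(S)$ using the finiteness of ${\rm O}(T(S))$ for the positive definite rank-$2$ lattice $T(S)$ — both mechanisms are valid, and indeed normality alone already gives $f(S^{g})=S^{g^{\pm1}}=S^{g}$ for every $f$. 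Two cosmetic points: ${\rm O}(T(S))$ need not be abelian (dihedral groups are not), though the property you actually use — that a conjugate of the rotation $g^*|_{T(S)}$ is $g^*$ or $(g^*)^{-1}$ — does hold; and the centralizer of $g$ is not literally ``equivalently ${\rm Aut}^0(S)$'', but nothing in your argument depends on that identification.
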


\begin{proof}
As ${\rm Aut}^0\, (S)$ is a normal subgroup of ${\rm Aut}\, (S)$ of finite index, it suffices to show that 
$$[{\rm Aut}^0\, (S) : {\rm Aut}^0\, (S) \cap {\rm Dec}\, (C)] < \infty\,\, .$$
Indeed, for a group $G$ and a normal subgroup $N <G$ and a subgroup $H < G$, we have an obvious inequality:
$$[G : W] \le [N : N \cap W] \cdot [G : N]\,\, .$$
As $g^* = id$ on ${\rm NS}\, (S)$ by Proposition \ref{prop41} (2), and $f^* = id$ on $T(S)$ for $f \in {\rm Aut}^0\, (S)$, it follows that $(f\circ g)^* = (g \circ f)^*$ on ${\rm NS}\, (S) \oplus T(S)$ for $f \in {\rm Aut}^0\, (S)$. As ${\rm NS}\, (S) \oplus T(S)$ is a finite index subgroup of $H^2(S, \Z)$, it follows that $(f \circ g)^* = (g \circ f)^*$ on $H^2(S, \Z)$. Hence $f \circ g = g \circ f$ by the global Torelli theorem for K3 surfaces. Then $f(S^g) = S^g$ for $f \in {\rm Aut}^0\, (S)$. Therefore, by Proposition \ref{prop41} (1), we have a natural group homomorphism 
$$\tau : {\rm Aut}^0\, (S) \to {\rm Aut}_{{\rm set}}(\{F_i, G_i\}_{i=1}^{3}) \simeq \Sigma_6\,\, .$$
As ${\rm Ker}\, \tau$ is of finite index in ${\rm Aut}^0\, (S)$ and 
$${\rm Ker}\, \tau < {\rm Dec}\, (G_3) \cap {\rm Aut}^0\, (S) < {\rm Aut}^0\, (S)\,\, ,$$
the result follows. 
\end{proof}

\begin{proposition}\label{prop43}
$[{\rm Dec}\, (C) : {\rm Ine}\, (C)] = \infty$. More strongly, there is $h \in {\rm Dec}\, (C)$ such that $h|_{C}$ is of infinite order and $h \in {\rm Aut}^{0}\, (S)$. 
\end{proposition}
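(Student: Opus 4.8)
The plan is to run the argument of Proposition \ref{prop31} with the roles reversed: instead of making $C = G_3$ a component of a singular fiber that meets three other components (which forces Mordell--Weil translations into ${\rm Ine}\,(C)$), I would look for an elliptic fibration $\psi : S \to \P^1$ in which $G_3$ occurs as a \emph{multiplicity-one} component of a singular fiber $D$, together with a section of $\psi$ meeting $G_3$ at a point different from the one hit by the zero section. Translation by such a section preserves $G_3$ as a set, acts on $G_3 \simeq \P^1$ with infinite order, and, being a Mordell--Weil translation, preserves $\omega_S$; this single automorphism proves both $[{\rm Dec}\,(C) : {\rm Ine}\,(C)] = \infty$ and the stronger statement.

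Concretely, using the configuration of the $24$ visible curves (Figure \ref{fig2}) together with the description of the elliptic fibrations of $S = S_3$ in \cite{OZ96}, I would exhibit an effective divisor $D$ of one of the Kodaira types $I_b$ ($b \ge 2$), $I_b^*$, $IV^*$, $III^*$, $II^*$ in which $G_3$ is a simple component, such that $|D|$ is a free pencil; this gives $\psi : S \to \P^1$. One then picks two disjoint $(-2)$-curves $A_0, A_1$ that are sections of $\psi$ and both meet $G_3$ (such curves can be taken among the $24$ visible ones, or found inside a Mordell--Weil group of positive rank). Choosing $A_0$ as the zero section, set $h := t_{A_1} \in {\rm MW}\,(\psi) \subset {\rm Aut}\,(S)$.

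It remains to trace the action of $h$. Since $A_0$ and $A_1$ meet the \emph{same} component $G_3$ of $D$, by \cite[Table 1, Page 604]{Ko63} the translation $h$ fixes every irreducible component of $D$; in particular $h(G_3) = G_3$, so $h \in {\rm Dec}\,(C)$. Let $G_3^0$ be the complement in $G_3$ of the point(s) where $G_3$ meets the rest of $D$; this is a one-dimensional algebraic group on which $h$ acts by translation by $A_1 \cap G_3^0$. If $D$ is additive, $G_3^0 \simeq \C$ and $A_1 \cap G_3^0 \ne A_0 \cap G_3^0$ (because $A_0 \cap A_1 = \emptyset$), so $h|_{G_3}$ is a nontrivial affine translation of $\P^1 = \overline{G_3^0}$, of infinite order. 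If $D$ is multiplicative, $G_3^0 \simeq \C^\times$ and $h|_{G_3^0}$ is multiplication by some $\lambda \in \C^\times$; here one uses that ${\rm MW}\,(\psi)$ is finitely generated, so the homomorphism $\sigma \mapsto (\text{multiplier of } t_\sigma \text{ on } G_3^0)$ from a finitely generated group cannot have infinite image contained in the roots of unity --- hence, choosing $A_1$ of infinite order and with $A_1 \cap G_3^0 \ne A_0 \cap G_3^0$, we get $\lambda$ not a root of unity and $h|_{G_3}$ again of infinite order. In both cases the class of $h$ in ${\rm Dec}\,(C)/{\rm Ine}\,(C) \hookrightarrow {\rm Aut}\,(C) = {\rm PGL}_2(\C)$ has infinite order, so $[{\rm Dec}\,(C) : {\rm Ine}\,(C)] = \infty$. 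Finally $h$, being a fibrewise translation of $\psi$, satisfies $h^*\omega_S = \omega_S$, i.e.\ $h \in {\rm Aut}^0\,(S)$, which is the refinement claimed.

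The main obstacle is the first half of the second paragraph: actually producing the fibration $\psi$ and the pair of sections --- equivalently, checking that $S_3$ carries an elliptic fibration with $G_3$ as a simple fiber component admitting a section through $G_3$ distinct from the zero section, so that the resulting translation is not in ${\rm Ine}\,(G_3)$. Everything after that is the Kodaira-table bookkeeping above. (For the bare statement $[{\rm Dec}\,(C) : {\rm Ine}\,(C)] = \infty$, one could argue more softly: any automorphism fixing $C$ pointwise acts on $N_{S|C} \simeq \sO_{\P^1}(-2)$ by a scalar, and trivially only if it is the identity in a neighbourhood of $C$, hence the identity; so ${\rm Ine}\,(C) \hookrightarrow \C^\times$ is abelian, and were $[{\rm Dec}\,(C):{\rm Ine}\,(C)]$ finite, Proposition \ref{prop42} would make ${\rm Aut}\,(S_3)$ virtually abelian, contradicting that it contains a non-abelian free subgroup by Proposition \ref{prop21}(2). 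This, however, does not give $h \in {\rm Aut}^0\,(S)$.)
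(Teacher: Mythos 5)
Your strategy is exactly the paper's: realize $G_3$ as a multiplicity-one component of a singular fiber of some elliptic fibration and use a Mordell--Weil translation determined by a second disjoint section through $G_3$. But the step you defer as ``the main obstacle'' --- actually exhibiting the fibration and the two sections --- is the entire substantive content of the proof; everything else is, as you say, Kodaira-table bookkeeping. The paper supplies the missing data explicitly from Figure~\ref{fig2}: the divisor
$D = G_3 + 2E_{33} + 3E_{33}' + 4F_3 + 3E_{31}' + 2E_{31} + G_1 + 2E_{32}'$
is of Kodaira type $III^*$ (an $E_7$ configuration with central node $F_3$), so $|D|$ is a free pencil defining $\varphi : S \to \P^1$ in which $G_3$ is the multiplicity-one end of an arm; the curves $E_{13}$ and $E_{23}$ are disjoint sections of $\varphi$ both meeting $G_3$, one takes $E_{13}$ as zero section, and $h$ is translation by $E_{23}$. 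Since the fiber is additive, only your ``additive'' branch is needed, and it runs exactly as in Proposition~\ref{prop31}. Without naming such a configuration your argument does not yet prove anything, so as written there is a genuine gap, albeit one you correctly located.

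Two smaller points. First, your fallback for a multiplicative fiber is flawed: from ``a finitely generated group cannot have \emph{infinite} image inside the roots of unity'' you cannot conclude that the multiplier of your chosen $A_1$ fails to be a root of unity --- the image of the multiplier homomorphism could simply be a finite group of roots of unity, in which case some power of every translation lands in ${\rm Ine}(G_3)$ and the fibration yields nothing. (This is harmless here only because an additive fiber is available.) Second, in your parenthetical ``soft'' argument, the claim that an element of ${\rm Ine}(C)$ acting trivially on $N_{S|C}$ must be the identity near $C$ is not justified: locally, $(x,y) \mapsto (x + y^2, y)$ fixes $\{y=0\}$ pointwise and acts trivially on the normal bundle without being the identity, so injectivity of ${\rm Ine}(C) \to \C^\times$ requires a separate (global) argument; moreover invoking Proposition~\ref{prop21}(2) presupposes Set-up~\ref{st21} for $S_3$, which again requires producing explicit fibrations. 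None of this affects the main line once the explicit $III^*$ fiber and sections are in place.
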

\begin{proof}
Consider the elliptic fibration $\varphi : S \to \P^1$ 
given by the divisor $D$ of Kodaira type $III^*$:
$$D := G_3 + 2E_{33} + 3E_{33}' + 4F_3 + 3E_{31}' + 2E_{31} + G_1 + 2E_{32}'\,\, .$$
Then $E_{13}$ and $E_{23}$ are disjoint sections of $\varphi$ meeting the singular fiber $D$ of $\varphi$ of Kodaira type $III^*$ at the same irreducible component $G_3$. 
We choose $E_{13}$ as the zero section of $\varphi$. Let $h \in {\rm MW}\, (\varphi)$ be the element defined by $E_{23}$. Then $h \in {\rm Dec}\, (C)$ and $h|_C$ is of infinite order for the same reason as in the proof of Proposition \ref{prop31}. Hence $[{\rm Dec}\, (C) : {\rm Ine}\, (C)] = \infty$, again for the same reason as in the proof of Proposition \ref{prop31}. We have also $h^*\omega_S = \omega_S$ as $h \in {\rm MW}\, (\varphi)$. 
\end{proof}

\begin{proposition}\label{prop44}
There is $f \in {\rm Ine}\, (C)$ such that $h_{{\rm top}}(f) >0$ and $f \in {\rm Aut}^{0}\, (S)$. 
\end{proposition}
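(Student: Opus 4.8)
The plan is to apply Proposition \ref{prop22} to the K3 surface $S = S_3$, so I need to exhibit two different elliptic fibrations $\varphi_1, \varphi_2 : S \to \P^1$, each with a singular fiber containing $C = G_3$ as a component meeting at least three other components, and each with an element of infinite order in its Mordell--Weil group; moreover I want to keep track of the action on $\omega_S$ so that the resulting $f$ lies in ${\rm Aut}^0\, (S)$. First I would write down, using the configuration of the $24$ visible smooth rational curves in Figure \ref{fig2}, two effective divisors $D_1$ and $D_2$ supported on those curves, each of Kodaira type $I_b^*$, $II^*$, $III^*$ or $IV^*$, with $G_3$ appearing as the ``central'' component (the one of maximal multiplicity, adjacent to three others). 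The fibration $\varphi$ of type $III^*$ used in Proposition \ref{prop43}, with central component $F_3$, is \emph{not} of this shape at $G_3$ (there $G_3$ is an end component), so I need two genuinely new fibrations; I would look for, say, a fiber of type $IV^*$ or $III^*$ built from the curves $G_3, E_{3j}, E_{3j}', F_j, G_k$ arranged so that $G_3$ is the multiplicity-$3$ (or $4$) component. By the symmetry of the configuration there should be at least two such divisors that are moreover \emph{different} as elliptic fibrations (their generic fibers have distinct generic points).

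Next, for each $\varphi_i$ I must produce an element of infinite order in ${\rm MW}(\varphi_i)$. The standard trick, already used twice in the paper, is to find two disjoint sections meeting the singular fiber $D_i$ at the \emph{same} component: the difference defines a Mordell--Weil element which acts on that component (an affine line $\C$) by a nonzero translation, hence has infinite order. So for each $D_i$ I would point out two visible $(-2)$-curves among the $24$ that are disjoint from each other, each meeting $D_i$ in a single point lying on a common component of $D_i$; the configuration of Figure \ref{fig2} makes such pairs easy to spot (the curves $E_{1j}$ and $E_{2j}$, or $G_1$ and $G_2$, etc., play this role in the other fibrations). Since such a Mordell--Weil translation acts as the identity on ${\rm NS}\, (S)$ and fixes $\omega_S$ (it is a translation along the fibers), the elements $f_i$ obtained in Proposition \ref{prop22} lie in ${\rm Ker}\,\tau_i$ and in ${\rm Aut}^0\,(S)$, and hence so does any word $f \in \langle f_1, f_2\rangle$; in particular the $f$ produced by Proposition \ref{prop21}(3) satisfies $f \in {\rm Aut}^0\,(S)$ and $f|_C = id_C$, i.e. $f \in {\rm Ine}\,(C) \cap {\rm Aut}^0\,(S)$ with $h_{{\rm top}}(f) > 0$.

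Once the two divisors are written down, the remaining verifications are routine: that each $|D_i|$ is a base-point-free pencil defining an elliptic fibration (this is automatic for a primitive effective divisor $D$ of one of the listed Kodaira types on a K3 surface, since $D^2 = 0$, $D$ is nef and $\dim |D| = 1$), that $G_3$ occupies the claimed central position in $D_i$ (read off from the dual graph of the curves in the divisor against the list in \cite[Page 565]{Ko63}), that the two fibrations are different (their fiber classes $[D_1], [D_2]$ are $\Q$-linearly independent in ${\rm NS}\,(S)_\Q$, which is enough by the discussion after Set-up \ref{st21} combined with the fact that different fiber classes force different fibrations), and that the chosen sections are indeed disjoint and meet the correct component. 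I would then simply invoke Proposition \ref{prop22} together with the observation above about the action on $\omega_S$.

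The main obstacle is the combinatorial bookkeeping in the first step: Figure \ref{fig2} has a rigid but intricate incidence pattern, and I need to locate \emph{two} starred-type fibers in which $G_3$ is the high-multiplicity component and which are moreover distinct fibrations, while simultaneously arranging disjoint pairs of sections for each. It is conceivable that $G_3$ naturally sits as the central component of only one obvious such fiber, in which case I would have to use the order-$3$ symmetry $g$ (or another symmetry of the configuration) to transport a second fiber into position, or pass to a fiber of type $I_b^*$ (which has two central trivalent vertices) rather than $IV^*$/$III^*$. Checking that the resulting two fibrations are not secretly the same one (up to the symmetry) is the subtle point; the cleanest way is the numerical criterion that $[D_1]$ and $[D_2]$ are linearly independent, which reduces everything to a finite computation in the lattice ${\rm NS}\,(S)$ generated by the $24$ visible curves.
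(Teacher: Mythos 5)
Your outline is exactly the paper's argument: two starred fibers with $G_3$ as the high-multiplicity central component, infinite-order Mordell--Weil translations produced from pairs of disjoint sections meeting a common fiber component, Proposition \ref{prop22}, and the observation that Mordell--Weil elements satisfy $f_k^*\omega_S=\omega_S$, hence so does every word in $\langle f_1,f_2\rangle$, giving $f\in{\rm Aut}^0(S)$. Two caveats. First, the step you defer --- actually exhibiting the two fibers and the sections --- is the entire content of the proof, and you express doubt about whether $G_3$ centers two distinct starred fibers; it does, with no need for the symmetry $g$ or for $I_b^*$ fibers. In Figure \ref{fig2} the curve $G_3$ meets exactly $E_{13}$, $E_{23}$, $E_{33}$, and each branch continues as a chain $G_3 - E_{i3} - E_{i3}' - F_i$; taking two of the three branches as the long arms of $\tilde E_7$ and truncating the third gives the paper's two type $III^*$ divisors
$$D_1 := F_1 + 2E_{13}' + 3E_{13} + 4G_3 + 3E_{23} + 2E_{23}' + F_2 + 2E_{33}\,\, ,$$
$$D_2 := F_1 + 2E_{13}' + 3E_{13} + 4G_3 + 3E_{33} + 2E_{33}' + F_3 + 2E_{23}\,\, ,$$
in each of which $G_3$ is the multiplicity-$4$ component meeting three others. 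These are different fibrations: they share irreducible components while being distinct divisors, and distinct fibers of a single fibration are disjoint (equivalently, your numerical criterion on $[D_1],[D_2]$ also works). Both fibrations admit $E_{11}'$ and $E_{12}'$ as disjoint sections meeting the same end component $F_1$, so with zero section $E_{11}'$ the section $E_{12}'$ defines $f_k\in{\rm MW}(\varphi_k)$ of infinite order lying in ${\rm Ine}(G_3)$, and Proposition \ref{prop22} finishes as you say.

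Second, a small but genuine error in your justification: a Mordell--Weil translation of infinite order does \emph{not} act as the identity on ${\rm NS}(S)$ --- if it did, it would act trivially on a finite-index subgroup of $H^2(S,\Z)$ (since it already acts trivially on $T(S)$, being symplectic) and would be the identity by the Torelli theorem. What you actually need, and what the paper uses, is weaker: since $E_{11}'$ and $E_{12}'$ meet the \emph{same} component of the designated fiber, Kodaira's Table 1 shows the translation acts trivially on the set of components of that fiber, hence preserves each of them and fixes the three intersection points on $G_3\simeq\P^1$, forcing $f_k|_{G_3}=id$. With that correction and the explicit data above, your proof coincides with the paper's.
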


\begin{proof}
Consider the elliptic fibrations $\varphi_k : S \to \P^1$ ($k=1$, $2$) given respectively by the divisors of Kodaira type $III^*$
$$D_1 := F_1 + 2E_{13}' + 3E_{13} + 4G_3 + 3E_{23} + 2E_{23}' + F_2 + 2E_{33}\,\, ,$$
$$D_2 := F_1 + 2E_{13}' + 3E_{13} + 4G_3 + 3E_{33} + 2E_{33}' + F_3 + 2E_{23}\,\, .$$
Then $E_{11}'$ and $E_{12}'$ are disjoint sections of both $\varphi_k$ meeting the irreducible component $F_1$ of the singular fiber $D_k$ of $\varphi_k$. 
We choose $E_{11}'$ as the zero section of both $\varphi_k$ and consider the element $f_k \in {\rm MW}\,(\varphi_k)$ defined by the section $E_{12}'$. 
Then for the same reason as in the proof of Proposition \ref{prop31}, $f_k$ is of infinite order in ${\rm MW}\,(\varphi_k)$ and is also an element of ${\rm Ine}\, (G_3)$. Note that $C = G_3$ is the irreducible component of $D_k$ meeting three other irreducible components of $D_k$ for both $k=1$ and $2$. Then there is $f \in \langle f_1, f_2 \rangle$ with $h_{{\rm top}}(f) >0$ by Proposition \ref{prop22}. Note also that $f^*\omega_S = \omega_S$. This is because $f_k^*\omega_S = \omega_S$ as $f_k \in {\rm MW}\, (\varphi_k)$ ($k =1$, $2$). This completes the proof. 
\end{proof}

Now Theorem \ref{thm2} has been proved. 

\begin{remark}\label{rem41} As ${\rm NS}(S_3)^*/{\rm NS}\,(S_3) \simeq \Z/3$ (\cite{Vi83}), the most algebraic K3 surface $S_3$ is not a $2$-elementary K3 surface in the sense of \cite{Ni81}. As it is pointed by Xun Yu, by using a similar method, one can show that the other most algebraic K3 surface $S_4$ (\cite{Vi83}) also satisfies the same property as in Theorem \ref{thm2}. We leave details to the readers. We note that $S_4$ is a $2$-elementary K3 surface with $(\rho(S_4), a(S_4)) = (20, 2)$ (see Remark \ref{rem31} for $a(S_4)$) under a non-symplectic involution $g_2$ and the quotient surface  $S_4/\langle g_2 \rangle$ is a smooth non-classical Coble surface. 
\end{remark}

Next we show Theorem \ref{cor1}. We continue to use the same notations introduced at the beginning of this section. 

Let $\nu : W \to \overline{W}$ be the minimal resolution of the quotient surface $\overline{W} := S/\langle g \rangle$ and $\pi : S \to \overline{W}$ the quotient map. By the description of $S^{g}$, the surface $\overline{W}$ has exactly $9$ singular points of type $\frac{1}{3}(1,1)$ at $\pi(P_{jk})$. Then $C_{jk} := \nu^{-1}(\pi(P_{jk}))_{{\rm red}}$ is a smooth rational curve with $(C_{jk}, C_{jk}) = -3$. We denote the smooth locus of $\overline{W}$ by $\overline{W}^0$, i.e., $\overline{W}^0 := \overline{W} \setminus \{\pi(P_{jk})\}_{j, k = 1}^{3}$. We set $\overline{F}_i := \pi(F_i)_{{\rm red}}$ and $\overline{G}_i := \pi(G_i)_{{\rm red}}$ ($1 \le i \le 3$). These $6$ curves are mutulally disjoint smooth rational curves in $\overline{W}^{0}$ with self-intersection number $-6$. As $\overline{F}_i, \overline{G}_i \subset \overline{W}^0$, the morphism $\nu$ is isomorphic around $\overline{F}_i$ and $\overline{G}_i$. For this reason, we denote $\nu^{*}(\overline{F}_i)$ and $\nu^{*}(\overline{G}_i)$ by the same letter $\overline{F}_i$ and $\overline{G}_i$. 

\begin{proposition}\label{prop45}
The surface $W$ is a smooth rational surface such that $|mK_W| = \emptyset$ for $m = -1$ and $-2$ and $|-3K_W|$ consists of a unique element. More precisely, 
$$|-3K_W| = \{2\sum_{i=1}^{3} (\overline{F}_i + \overline{G}_i) + \sum_{j, k = 1}^{3} C_{jk}\}\,\, .$$ 
\end{proposition}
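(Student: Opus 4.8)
The plan is to pass all computations through the $\langle g\rangle$-cover $\pi: S \to \overline W$, using that $S$ is a K3 surface, so $K_S \sim 0$ and $\pi^*K_{\overline W}$ is torsion, together with the ramification formula for $\nu$ and for $\pi$. First I would establish that $W$ is a smooth rational surface: it is smooth because $\nu$ is a resolution, and it is rational because it is the minimal resolution of a quotient of a surface with an elliptic fibration by a non-symplectic automorphism of order $3$ (the quotient $\overline W$ has $p_g = 0$ since $g^*\omega_S = \zeta_3\omega_S$ has no invariants, and $q = 0$, $\kappa \le 0$ because $|mK_{\overline W}|$ will be shown to be small; then Castelnuovo's criterion applies). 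Alternatively, $\varphi_1$ from Proposition \ref{prop44} descends to a rational elliptic-or-rational fibration on $W$, exhibiting rationality directly. I would choose whichever phrasing is cleanest.

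Next comes the canonical bundle bookkeeping, which is the heart of the proof. Write $\overline D := \sum_{j,k}\pi(P_{jk})$ for the nine $\frac13(1,1)$-points and let $E_{jk} := C_{jk}$ be the exceptional $(-3)$-curves of $\nu$. For a $\frac13(1,1)$-singularity, the minimal resolution has discrepancy $-\tfrac13$ along the exceptional curve, so $K_W = \nu^*K_{\overline W} - \tfrac13\sum_{j,k} C_{jk}$, hence $3K_W = 3\nu^*K_{\overline W} - \sum_{j,k}C_{jk}$. On the other hand, $\pi: S \to \overline W$ is a cyclic triple cover branched exactly along $F_i$ and $G_i$ (the fixed curves of $g$) — note the nine isolated fixed points contribute quotient singularities, not branch divisors. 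Over $\overline W^0$ the Hurwitz formula gives $0 = K_S|_{\pi^{-1}(\overline W^0)} = \pi^*(K_{\overline W^0}) + R$ where $R = \sum(F_i + G_i)$ is the reduced ramification divisor (ramification index $3$ contributes $(3-1)/1$? no: for a cyclic cover totally ramified along a divisor, $\pi^*(\text{branch}) = 3\cdot(\text{ramification})$ and the ramification formula reads $K_S = \pi^*K_{\overline W} + 2R$), so $\pi^*K_{\overline W} \sim -2R = -2\sum(F_i + G_i)$ on $S$, up to the torsion ambiguity coming from $\Pic$. Pushing down and pulling back through $\nu$, and using that $\pi^*$ is injective on divisor classes modulo torsion together with $\Pic(W)$ being torsion-free (rational surface), I get $3\nu^*K_{\overline W} \sim -2\sum(\overline F_i + \overline G_i)$, whence $3K_W \sim -2\sum_{i=1}^3(\overline F_i + \overline G_i) - \sum_{j,k=1}^3 C_{jk}$, i.e. the claimed effective anticanonical cycle $Z := 2\sum(\overline F_i + \overline G_i) + \sum C_{jk} \in |-3K_W|$.

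It then remains to show $|-3K_W| = \{Z\}$ and $|-K_W| = |-2K_W| = \emptyset$. For uniqueness of $Z$: pull back a hypothetical second member $Z' \in |-3K_W|$ to $S$; since $\pi^*(-3K_W)$ is (a torsion twist of) $\mathcal O_S(2\pi^*(\sum \overline F_i + \overline G_i))$ which restricts to a degree-$0$ line bundle on the rational base of any elliptic fibration and has no sections moving, one concludes $h^0(W, -3K_W) = 1$ by showing $Z$ is the fixed part — concretely, $Z'\ge \sum C_{jk}$ because each $C_{jk}$ is a $(-3)$-curve with $C_{jk}\cdot(-3K_W) = C_{jk}\cdot Z = -3 + 2\cdot 0 = -3 < 0$ wait I need $C_{jk}\cdot(\overline F_i + \overline G_i) = 0$ as they are disjoint, and $C_{jk}^2 = -3$, so $C_{jk}\cdot Z = -3$, forcing $C_{jk} \subset \mathrm{Bs}|-3K_W|$ with multiplicity $\ge 1$; subtracting, $Z' - \sum C_{jk} \in |-3K_W + \sum C_{jk}| = |2\sum(\overline F_i+\overline G_i)|$, and since the $\overline F_i, \overline G_i$ are disjoint $(-6)$-curves the same self-intersection argument pins them as fixed with multiplicity exactly $2$, giving $Z' = Z$. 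For $|-K_W| = |-2K_W| = \emptyset$: if $-K_W$ or $-2K_W$ were effective, adding members would produce a member of $|-3K_W|$ or $|-6K_W|$ with a component outside $Z$ or with wrong multiplicities along the $C_{jk}$ (since $\tfrac13 Z$ is not integral, no sub-multiple of $Z$ is an effective divisor), contradicting either the uniqueness just proved or a parallel count at level $m=-1,-2$; more cleanly, $-mK_W$ effective for $m\in\{1,2\}$ with $3\mid$ nothing forces a contradiction with $C_{jk}\cdot(-mK_W) = -m \notin \mathbb Z_{\ge 0}\cdot(\text{mult})$ being incompatible with $Z$ being the only effective anticanonical-type cycle. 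The main obstacle I anticipate is controlling the torsion in $\Pic(\overline W)$ / making the descent $3\nu^*K_{\overline W} \sim -2\sum(\overline F_i + \overline G_i)$ rigorous as an equality of divisor classes on $W$ rather than merely $\mathbb Q$-linear equivalence; I would handle this by computing $h^0(W, -3K_W)$ directly via $\pi_*\mathcal O_S$ decomposing into $g$-eigensheaves and a Riemann–Roch / Leray argument on $W$, which also yields the emptiness statements for $m = -1, -2$ simultaneously.
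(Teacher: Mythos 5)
Your overall strategy coincides with the paper's: use the ramification formula for the degree-$3$ cyclic cover $\pi$ (totally ramified along $\sum(F_i+G_i)$, with the nine isolated fixed points contributing only quotient singularities) to get $-K_{\overline{W}}\sim_{\Q}\frac{2}{3}\sum(\overline{F}_i+\overline{G}_i)$, combine with the discrepancy $-\frac13$ of the $\frac13(1,1)$-resolutions to identify $-3K_W$ with the cycle $Z=2\sum(\overline{F}_i+\overline{G}_i)+\sum C_{jk}$, and then exploit the negative self-intersections of the disjoint curves $\overline{F}_i,\overline{G}_i$ (and $C_{jk}$) to force them into the fixed part of any member of $|{-n}K|$. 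Your uniqueness argument for $|{-3}K_W|$ (peel off $\sum C_{jk}$ because $Z'\cdot C_{jk}=-3<0$, then peel off $\sum(\overline{F}_i+\overline{G}_i)$ twice because the residual class keeps negative intersection with the $(-6)$-curves, ending with an effective divisor linearly equivalent to $0$) is correct and is the paper's argument transported from $\overline{W}$ to $W$. Your worry about torsion is resolved exactly as you suggest once $W$ is known to be rational; and rationality itself comes from $q(W)=0$ together with $|mK_W|=\emptyset$ for $m>0$ (which follows because $\nu_*D$ would be effective and linearly equivalent to $mK_{\overline{W}}\sim_{\Q}-\frac{2m}{3}\sum(\overline{F}_i+\overline{G}_i)$, impossible for $m>0$ by pairing with an ample class), so that Castelnuovo's criterion applies. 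You should make that step explicit rather than appealing to ``$\kappa\le 0$''.

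The one genuine gap is the emptiness of $|{-}K_W|$ and $|{-2}K_W|$: neither of your two sketches closes. The ``clean'' version fails because $D\cdot C_{jk}=-m<0$ only forces $C_{jk}\subset\mathrm{Supp}\,D$, which is perfectly consistent for $m=1,2$; and the ``adding members'' version works for $m=-1$ (where $3D=Z$ contradicts $Z$ not being divisible by $3$) but for $m=-2$ requires first proving $|{-6}K_W|=\{2Z\}$, which you do not do. The paper's fix is a uniform peeling argument on $\overline{W}$: for $M\in|{-n}K_{\overline{W}}|$ one has $M\cdot\overline{F}_i=-4n<0$, so $M-\sum(\overline{F}_i+\overline{G}_i)$ is effective and $\sim_{\Q}\frac{2n-3}{3}\sum(\overline{F}_i+\overline{G}_i)$; for $n=1$ this class is anti-effective and nonzero, a contradiction, and for $n=2$ one more round of peeling gives the same contradiction, while for $n=3$ two rounds terminate at the zero divisor, yielding both emptiness and uniqueness in one stroke. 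The same computation can be run on $W$ using your expression $-mK_W\sim_{\Q}\frac{2m}{3}\sum(\overline{F}_i+\overline{G}_i)+\frac{m}{3}\sum C_{jk}$, subtracting the forced components and checking that the residual $\Q$-class has negative degree against an ample divisor when $m=1,2$. With that step repaired your proof is complete and is essentially the paper's.
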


\begin{proof} As $q(S) = 0$ and $\nu^{-1} \circ \pi : S \dasharrow W$ is a dominant rational map between smooth varieties, it follows that $q(W) = 0$. Here $q(W)$ and $q(S)$ are the irregularities of $W$ and $S$. 

As $\pi$ is a finite cyclic cover of degree $3$, being totally ramified along $\sum_{i=1}^{3} (F_{i} + G_{i})$ over $\overline{W}^{0}$ and $K_S$ is trivial, it follows from the ramification formula that $-K_{\overline{W}}$ is $\Q$-linearly equivalent to an effective $\Q$-divisor: 
$$\frac{2}{3}\sum_{i=1}^{3} (\overline{F}_i + \overline{G}_i)\,\, \sim_{\Q}\,\, -K_{\overline{W}}\,\, .$$
In particular, $mK_{\overline{W}} \not\sim_{\Q} 0$ for any integer 
$m \not= 0$. Here and hereafter, we denote by $\sim_{\Q}$ the $\Q$-linear equivalence of Weil divisors. 

Assume that $|mK_W| \not= \emptyset$ for some integer 
$m \not= 0$. 

Then there is $D \in |mK_W|$. Let $\overline{D} := \nu_*D$. Then $\overline{D}$ is a {\it non-zero} effective Weil divisor on $\overline{W}$ and it is linearly equivalent to $mK_{\overline{W}}$, as $\overline{W}$ is normal and $\nu$ is isomorphic over $\overline{W}^{0}$. We also note that as $\overline{W}$ has only quotient singularities, any Weil divisor on $\overline{W}$ is $\Q$-Cartier.  

As $\overline{W}$ is normal and projective and $-K_{\overline{W}}$ is $\Q$-linearly equivalent to a non-zero effective divisor, it follows that $m < 0$. In particular, $|mK_W| = \emptyset$ for all positive integer $m >0$. As in addition $q(W) = 0$, $W$ is a rational surface by Castelnuovo's criterion. 

From now, we assume that $m <0$. Set $n = -m$. Then $n$ is a positive integer. 
Assume that $M \in |-nK_{\overline{W}}|$. 
Note that $-nK_{\overline{W}}$ is $\Q$-linearly equivalent to an effective $\Q$-divisor
$$\frac{2n}{3} \sum_{i=1}^{3} (\overline{F}_i + \overline{G}_i)\,\, .$$
As the $6$ curves $\overline{F}_i$ and $\overline{G}_i$ are mutually disjoint curves with $(\overline{F}_i, \overline{F}_i) = (\overline{G}_i, \overline{G}_i) = -6 <0$, it follows that $(M, \overline{F}_i) <0$ and $(M, \overline{G}_i) < 0$ for $i =1$, $2$, $3$. Then 
$$\overline{F}_i, \overline{G}_i \subset {\rm Supp}\, M\,\, ,$$ 
as $M$ is an effective Weil divisor. Hence 
$$M_1 := M - \sum_{i=1}^{3}(\overline{F}_i + \overline{G}_i)\,\, (\,\, \sim_{\Q}\,\, \frac{2n-3}{3} \sum_{i=1}^{3}(\overline{F}_i + \overline{G}_i)\,\, )$$
 is an effective Weil divisor. Thus $n \ge 2$. If $n \ge 2$, then $(M_1, \overline{F}_i) < 0$ and $(M_1, \overline{G}_i) < 0$. Therefore, 
for the same reason above, 
$$M_2 := M_1 - \sum_{i=1}^{3}(\overline{F}_i + \overline{G}_i) = M - 2\sum_{i=1}^{3}(\overline{F}_i + \overline{G}_i)\,\, (\,\, \sim_{\Q}\,\, \frac{2n-6}{3} \sum_{i=1}^{3}(\overline{F}_i + \overline{G}_i)\,\, )$$ 
is also an effective divisor. Thus $n \ge 3$ and if $n=3$, then $M_2 = 0$ and therefore $|-3K_{\overline{W}}|$ consists of the single element $2\sum_{i=1}^{3}(\overline{F}_i + \overline{G}_i)$. 

Hence, by the observation made at the biginning, $|-K_{W}| = \emptyset$ and $|-2K_{W}| = \emptyset$, and moreover if $D \in |-3K_{W}|$ then 
$$\nu_{*}D = 2\sum_{i=1}^{3}(\overline{F}_i + \overline{G}_i)$$
as a divisor. Hence we have
$$D = 2\sum_{i=1}^{3}(\overline{F}_i + \overline{G}_i) + \sum_{j, k = 1}^{3} a_{jk}C_{jk}$$
for some non-negative integers $a_{jk}$ as a divisor. Here, the integers $a_{jk}$ are uniquely determined, as $C_{jk}$ are mutually disjoint curves with $(C_{jk}, C_{jk}) = -3$ and $(C_{jk}, \overline{F}_i) = (C_{jk}, \overline{G}_i) = 0$. 
Hence the element $|-3K_{W}|$ is unique if exists. On the other hand, as $\nu$ is a minimal resolution of $9$ singular points of type $1/3(1,1)$, we know by the adjunction formula that $-3K_{W}$ is linearly equivalent to the effective divisor
$$2\sum_{i=1}^{3}(\overline{F}_i + \overline{G}_i) + \sum_{j, k = 1}^{3} C_{jk}\,\, .$$
This completes the proof. 
\end{proof}

Let $\overline{C} := \overline{G}_3 \subset W$. Then $\overline{C}$ is a smooth rational curve on $W$. 
The next proposition completes the proof of Theorem \ref{cor1}. 
\begin{proposition}\label{prop46} The curve $\overline{C}$ satisfies

\begin{enumerate}
\item 
$[{\rm Aut}\, (W) : {\rm Dec}\, (\overline{C})] < \infty$ and $[{\rm Dec}\, (\overline{C}) : {\rm Ine}\, (\overline{C})] = \infty$.
\item There is $f_W \in {\rm Ine}\, (\overline{C})$ such that $h_{{\rm top}}(f_W) > 0$. 
\end{enumerate}

\end{proposition}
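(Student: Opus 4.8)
The plan is to obtain $(W,\overline{C})$ from $(S_3,G_3)$ by descending the automorphisms of Propositions \ref{prop43} and \ref{prop44} along $\pi$ and then lifting them to the minimal resolution $W$. The key point is that the automorphisms produced there all lie in ${\rm Aut}^0\, (S)$, and every element $\psi$ of ${\rm Aut}^0\, (S)$ commutes with $g$ by the argument already used in the proof of Proposition \ref{prop42} (namely $g^* = {\rm id}$ on ${\rm NS}\, (S)$, $\psi^* = {\rm id}$ on $T(S)$, finite index of ${\rm NS}\, (S) \oplus T(S)$ in $H^2(S, \Z)$, and the global Torelli theorem). Hence each $\psi \in {\rm Aut}^0\, (S)$ descends to an automorphism $\overline{\psi}$ of $\overline{W} = S/\langle g \rangle$; since $\overline{\psi}$ permutes the nine singular points of $\overline{W}$, it lifts uniquely to an automorphism $\psi_W$ of the minimal resolution $W$, and the dominant rational map $\nu^{-1} \circ \pi$ (of degree $3$) satisfies $\psi_W \circ (\nu^{-1} \circ \pi) = (\nu^{-1} \circ \pi) \circ \psi$. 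Moreover, because $G_3 \subset S^g$ lies over the smooth locus $\overline{W}^0$ (Proposition \ref{prop41} (1)), the restriction $\pi|_{G_3} \colon G_3 \to \overline{G}_3 = \overline{C}$ is an isomorphism and $\nu$ is an isomorphism near $\overline{C}$. Consequently, for $\psi \in {\rm Dec}\, (G_3) \cap {\rm Aut}^0\, (S)$ we get $\psi_W \in {\rm Dec}\, (\overline{C})$ with $\psi_W|_{\overline{C}}$ conjugate to $\psi|_{G_3}$ via this isomorphism; in particular $\psi \in {\rm Ine}\, (G_3)$ yields $\psi_W \in {\rm Ine}\, (\overline{C})$, and $\psi|_{G_3}$ of infinite order yields $\psi_W|_{\overline{C}}$ of infinite order.

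Granting this, assertion (1) runs parallel to Propositions \ref{prop42} and \ref{prop31}. Any $\alpha \in {\rm Aut}\, (W)$ fixes $K_W$, hence fixes the unique member $B := 2\sum_{i=1}^{3}(\overline{F}_i + \overline{G}_i) + \sum_{j,k=1}^{3} C_{jk}$ of $|-3K_W|$ (Proposition \ref{prop45}), hence preserves ${\rm Supp}\, B$; since $\alpha$ preserves self-intersection numbers and the $\overline{F}_i, \overline{G}_i$ are exactly the $(-6)$-curves in ${\rm Supp}\, B$ while the $C_{jk}$ are the $(-3)$-curves, $\alpha$ permutes the set $\{\overline{F}_i, \overline{G}_i\}_{i=1}^{3}$. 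This gives a homomorphism ${\rm Aut}\, (W) \to \Sigma_6$ with finite image whose kernel lies in ${\rm Dec}\, (\overline{G}_3) = {\rm Dec}\, (\overline{C})$, so $[{\rm Aut}\, (W) : {\rm Dec}\, (\overline{C})] < \infty$. Applying the descent procedure to the element $h$ of Proposition \ref{prop43} gives $h_W \in {\rm Dec}\, (\overline{C})$ with $h_W|_{\overline{C}}$ of infinite order, whence $[{\rm Dec}\, (\overline{C}) : {\rm Ine}\, (\overline{C})] = \infty$ exactly as in the proof of Proposition \ref{prop31}. Applying it to the element $f$ of Proposition \ref{prop44} gives $f_W \in {\rm Ine}\, (\overline{C})$, so for (2) it only remains to check $h_{{\rm top}}(f_W) > 0$.

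For the entropy I would compare spectral radii on N\'eron--Severi groups. Both $S$ and $W$ are projective surfaces, so by the formula recalled in the Introduction, $h_{{\rm top}}(f)$ (resp. $h_{{\rm top}}(f_W)$) is the logarithm of the spectral radius of $f^*$ on ${\rm NS}\, (S)_{\Q}$ (resp. of $f_W^*$ on ${\rm NS}\, (W)_{\Q}$); the former is positive by Proposition \ref{prop44}. Now $g^* = {\rm id}$ on ${\rm NS}\, (S)$ makes $\pi^* \colon {\rm NS}\, (\overline{W})_{\Q} \to {\rm NS}\, (S)_{\Q}$ an isomorphism intertwining $\overline{f}^*$ and $f^*$, while the minimal resolution of the quotient singularities of $\overline{W}$ gives an $f_W^*$-stable splitting ${\rm NS}\, (W)_{\Q} = \nu^* {\rm NS}\, (\overline{W})_{\Q} \oplus \bigoplus_{j,k=1}^{3} \Q\, [C_{jk}]$, on whose first summand $f_W^*$ is conjugate via $\nu^*$ to $\overline{f}^*$ and on whose second summand $f_W^*$ merely permutes the classes $[C_{jk}]$ and so has spectral radius $1$. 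Therefore the spectral radius of $f_W^*$ on ${\rm NS}\, (W)_{\Q}$ equals that of $f^*$ on ${\rm NS}\, (S)_{\Q}$, and $h_{{\rm top}}(f_W) = h_{{\rm top}}(f) > 0$.

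The step I expect to be delicate is this last one: one must verify that the correspondence $f \mapsto \overline{f} \mapsto f_W$ is compatible with the cohomological operations $\pi^*$ and $\nu^*$, and that $\nu^* {\rm NS}\, (\overline{W})_{\Q}$ is genuinely $f_W^*$-stable (it is, since $f_W$ covers $\overline{f}$) whereas the complementary span of the exceptional $(-3)$-curves is merely permuted and contributes spectral radius $1$. Alternatively, one can avoid the lattice bookkeeping by invoking invariance of the first dynamical degree under dominant, generically finite semiconjugacies, applied to $\nu^{-1} \circ \pi$, together with the identity $h_{{\rm top}} = \log(\text{first dynamical degree})$ for automorphisms of smooth projective surfaces; either way one obtains $h_{{\rm top}}(f_W) = h_{{\rm top}}(f)$.
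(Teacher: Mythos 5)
Your proposal is correct and follows essentially the same route as the paper's proof: the $\Sigma_6$-representation induced by the unique member of $|-3K_W|$ gives $[{\rm Aut}\,(W):{\rm Dec}\,(\overline{C})]<\infty$, and the automorphisms $h$ and $f$ of Propositions \ref{prop43} and \ref{prop44} are descended through the $g$-quotient (using that they lie in ${\rm Aut}^{0}\,(S)$ and hence commute with $g$) and lifted to the minimal resolution exactly as in the paper. The only divergence is the justification of $h_{{\rm top}}(f_W)=h_{{\rm top}}(f)$: the paper simply cites Dinh--Nguyen's invariance of dynamical degrees under generically finite semiconjugacy (which you offer as your alternative), while your primary argument via the $f_W^{*}$-stable splitting ${\rm NS}\,(W)_{\Q}=\nu^{*}{\rm NS}\,(\overline{W})_{\Q}\oplus\bigoplus_{j,k}\Q\,[C_{jk}]$ is an equally valid, more hands-on substitute.
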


\begin{proof} As ${\rm Aut}\, (W)$ acts on $|-3K_{W}|$, the divisor  
$$2\sum_{i=1}^{3}(\overline{F}_i + \overline{G}_i) + \sum_{j, k = 1}^{3} C_{jk}$$ 
is stable under ${\rm Aut}\, (W)$ by Proposition \ref{prop45}. Therefore we have a natural representaion 
$$\tau : {\rm Aut}\, (W) \to {\rm Aut}_{{\rm set}}(\{\overline{F}_i, \overline{G}_i\}_{i=1}^{3}) \simeq \Sigma_6\,\, .$$
Then $[{\rm Aut}\, (W) : {\rm Ker}\, \tau] < \infty$. As ${\rm Ker}\, \tau < {\rm Dec}\, (\overline{G}_3) < {\rm Aut}\, (W)$, we have $[{\rm Aut}\, (W) : {\rm Dec}\, (\overline{G}_3)] < \infty$.   

We show that $[{\rm Dec}\, (\overline{C}) : {\rm Ine}\, (\overline{C})] = \infty$. Consider the element $h \in {\rm Aut}\, (S)$ in Proposition \ref{prop43}. Recall that $g^* = id$ on ${\rm NS}\, (S)$ and $h^* = id$ on $T(S)$ as $h \in {\rm Aut}^{0}\, (S)$. Thus $h \circ g = g \circ h$ for the same reason as in the proof of Proposition \ref{prop42}. It follows that $h$ induces an automorphism of $h_W \in {\rm Aut}\, (W)$. By definition of $h_W$, the map $\nu^{-1} \circ \pi : S \dasharrow W$ is equivariant under the actions $h$ and $h_W$. As $\overline{C} = \nu^{-1} \circ \pi (G_3)$, $h(G_3) = G_3$ and $h|_{G_3}$ is of infinite order in ${\rm Aut}\, (G_3)$, it follows that $h_W(\overline{C}) = \overline{C}$ and $h_W|_{\overline{C}}$ is of infinite order in ${\rm Aut}\, (\overline{C})$. Hence the class of $h_{W}$ is of infinite order in the quotient group ${\rm Dec}\, (\overline{C})/{\rm Ine}\, (\overline{C})$. This shows $[{\rm Dec}\, (\overline{C}) : {\rm Ine}\, (\overline{C})] = \infty$. 

We show the assertion (2). Let $f \in {\rm Aut}\, (S)$ be an automorphism found in Proposition \ref{prop44}. As $g^* = id$ on ${\rm NS}\, (S)$ and $f \in {\rm Aut}^{0}\, (S)$, for the same reason as above, $f$ induces an automorphism $f_W \in {\rm Aut}\, (W)$ of $W$. As $f \in {\rm Ine}\, (G_3)$, i.e., $f|_{G_3} = id_{G_3}$, we have $f_W|_{\overline{C}} = id_{\overline{C}}$, i.e., $f_W \in {\rm Ine}\, (\overline{C})$, again for the same reason as above. Moreover, as $\nu^{-1} \circ \pi : S \dasharrow W$ is a generically finite dominant rational map which is equivariant under the actions of $f$ and $f_{W}$, 
we have 
$$h_{{\rm top}}(f_W) = h_{{\rm top}}(f) > 0$$ 
by \cite[Corollary 1.2]{DN11}. This completes the proof.  
\end{proof}  

\section{Proof of Theorem \ref{thm3}}\label{sect5} 

In this section, we show Theorem \ref{thm3}. 

In \cite{Og10}, we find the following:

\begin{theorem}\label{thm51} There is a (necessarily non-projective) K3 surface $S$ with an automorphism $g$ satisfying the following properties:

\begin{enumerate}
\item ${\rm Aut}\, (S) = \langle g \rangle \simeq \Z$. 
\item The topological entropy of $g$ is the natural logarithm of the Salem number 
$$\alpha = 1.200026\ldots\,\, ,$$ 
which is the unique real root $> 1$ of the following irreducible monic polynomial in the polynomial ring $\Z [x]$:
$$\varphi_{14}(x) = x^{14} -x^{11} - x^{10} + 
x^{7} - x^{4} -x^{3} + 1\,\, .$$  
\item Complete irreducible curves on $S$ are exactly $8$ smooth rational curves $C_i$ ($1 \le i \le 8$) and 
$${\rm NS}\, (S) = \Z \langle C_i \rangle_{i=1}^{8} \simeq E_8\,\, ,$$
i.e., the intersection number $(C_i, C_j)$ ($1 \le i < j \le 8$) is $0$ or $1$ and it is $1$ exactly when $(i, j)$ is in 
$$\{(1, 2)\,\, ,\,\, (2, 3)\,\, ,\,\,(3, 4)\,\, ,\,\, (4, 5)\,\, ,\,\,(5, 6)\,\, ,\,\, (6, 7)\,\, ,\,\,(3, 8)\}\,\, .$$
\item The fixed locus $S^g$ of $g$ consists of $C := C_3$ and $7$ isolated points $Q_i$ ($1 \le i \le 7$) on $\cup_{i=1}^{8} C_ i \setminus C$ and one point $Q$ outside $\cup_{i=1}^{8}C_i$. 
\end{enumerate}
\end{theorem}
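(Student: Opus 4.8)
The plan is to reconstruct Theorem~\ref{thm51} by McMullen's lattice-theoretic method \cite{Mc02} for producing K3 automorphisms of positive entropy, combined with the surjectivity of the period map and the strong Torelli theorem. The Salem polynomial $\varphi_{14}$ has, apart from its real roots $\alpha$ and $\alpha^{-1}$, six conjugate pairs of roots on the unit circle; the idea is to take the period eigenvalue of $g$ to be one of these unit-circle conjugates $\delta$, a Galois conjugate of $\alpha$ that is not a root of unity. This is exactly the feature available only on a non-projective K3 surface, and it is what forces the algebraic dimension of $S$ to be $0$.

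First I would carry out the lattice realization. Writing the K3 lattice as $\Lambda:=U^{\oplus 3}\oplus E_8(-1)^{\oplus 2}$, I use the orthogonal splitting $\Lambda = E_8(-1)\perp T$ with $T:=U^{\oplus 3}\oplus E_8(-1)$, the even unimodular lattice of signature $(3,11)$. The task is to find $F_0\in{\rm O}(T)$ whose characteristic polynomial is $\varphi_{14}$ and whose invariant real form has exactly one positive unit-circle eigenplane; then I set $F:=\id_{E_8(-1)}\oplus F_0$ on $\Lambda$, so that $g^{*}$ will act as the identity on the future N\'eron--Severi lattice $E_8(-1)$. Here the real signature bookkeeping is forced: the hyperbolic pair $\{\alpha,\alpha^{-1}\}$ contributes $(1,1)$, and among the six unit-circle pairs exactly one must be positive to produce $(3,11)$. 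The realizability of such an $F_0$ over $\Z$ on the even unimodular lattice $T$ follows from the Salem-polynomial realization theory in the spirit of \cite{Mc02}, and is facilitated by the favorable values $\varphi_{14}(1)=-1$ and $\varphi_{14}(-1)=1$, which are units and hence kill the local obstructions.

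Next I pass from lattices to geometry. Diagonalising $F_0$ over $\C$, let $\omega_0\in T_{\C}$ be an eigenvector for the chosen unit-circle eigenvalue $\delta$; by the signature count the real plane $\langle{\rm Re}\,\omega_0,{\rm Im}\,\omega_0\rangle$ is positive definite, so $\C\omega_0$ is an admissible period. Surjectivity of the period map gives a K3 surface $S$ and a marking $H^2(S,\Z)\simeq\Lambda$ sending $\C\omega_S$ to $\C\omega_0$; then $F$ is an integral Hodge isometry with $F\omega_0=\delta\omega_0$. Since $F$ acts as the identity on ${\rm NS}(S)=E_8(-1)$, it fixes every $(-2)$-class and hence preserves the K\"ahler chamber, so the strong Torelli theorem yields $g\in{\rm Aut}(S)$ with $g^{*}=F$, directly and without Weyl adjustment. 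Because ${\rm NS}(S)\simeq E_8(-1)$ is negative definite there is no ample class, so $S$ has algebraic dimension $0$; the effective $(-2)$-classes are exactly the simple roots of the $E_8$ chamber, which are represented by eight smooth rational curves $C_i$ with the stated incidence graph, giving assertion~(3). Assertion~(2) is then immediate: the spectral radius of $g^{*}$ on $H^2(S,\C)$ is the largest eigenvalue modulus, namely $\alpha$, so $h_{{\rm top}}(g)=\log\alpha$.

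Finally, for assertion~(1) I would compute the centralizer. Any $h\in{\rm Aut}(S)$ acts on $T$ as a Hodge isometry commuting with $F_0$; since $\varphi_{14}$ is irreducible, the commutant of $F_0$ in ${\rm O}(T_{\Q})$ is the action of the field $\Q(\alpha)$, whose norm-one units modulo torsion are generated by multiplication by $\alpha$. Combined with the finiteness of ${\rm O}(E_8)$ on the negative-definite part and the fact that the eigenvalue $\delta$ of $g$ on $\omega_S$ is not a root of unity (which excludes a nontrivial torsion factor), this pins down ${\rm Aut}(S)=\langle g\rangle\simeq\Z$. Assertion~(4) on the fixed locus $S^{g}$ then follows from the holomorphic and topological Lefschetz fixed-point formulas together with $e(S)=24$: the local analysis shows that the branch vertex $C_3$ of the $E_8$ diagram is a pointwise-fixed curve, with seven further isolated fixed points on $\bigcup_i C_i\setminus C_3$ and one isolated fixed point off the curves. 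The main obstacle is the lattice step together with the chamber analysis: realizing $\varphi_{14}$ integrally on the even unimodular $T$ with the correct $(3,11)$ signature split, and then verifying that the resulting K\"ahler chamber produces exactly the eight curves $C_i$ in the $E_8$-configuration and no other $(-2)$-curves; everything afterwards is Torelli and fixed-point bookkeeping.
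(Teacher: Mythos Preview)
The paper does not prove Theorem~\ref{thm51}; it simply quotes it from \cite{Og10}. Your sketch is a faithful reconstruction of the McMullen--Oguiso method used there, and the overall architecture (realize $\varphi_{14}$ on $T=U^{\oplus 3}\oplus E_8(-1)$, glue with $\id$ on $E_8(-1)$, apply surjectivity of the period map and strong Torelli, then read off entropy, curves, and fixed locus) is correct. A few points deserve sharpening.

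First, in your argument for (1) you assert that any $h\in{\rm Aut}(S)$ acts on $T$ as a Hodge isometry \emph{commuting with} $F_0$. This is true but not automatic; the clean justification is that the character $\chi:{\rm O}(T)_{\rm Hodge}\to\C^{\times}$, $\psi\mapsto$ (eigenvalue on $\omega_S$), is injective. Indeed, if $\psi\omega_S=\omega_S$ then $\omega_S\in (T^{\psi})_{\C}$, and since ${\rm NS}(S)\simeq E_8(-1)$ is non-degenerate, $T(S)=T$ is the \emph{minimal} primitive sublattice whose complexification contains $\omega_S$; hence $T^{\psi}=T$ and $\psi=\id$. Injectivity of $\chi$ makes ${\rm O}(T)_{\rm Hodge}$ abelian, so it lies in the commutant $\Q[F_0]\simeq\Q(\alpha)$ as you claim. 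Note also that since both $E_8(-1)$ and $T$ are unimodular, $H^2(S,\Z)={\rm NS}(S)\oplus T(S)$ as lattices, so there is no gluing obstruction when applying Torelli.

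Second, the statement that the norm-one units of the relevant order, modulo torsion, are generated by $\alpha$ is a genuine arithmetic input: the rank is $1$ by Dirichlet (the kernel of $N_{K/K^+}$ on units has rank $7-6=1$), the only roots of unity in $\Q(\alpha)$ are $\pm 1$ because $K$ has a real place, and $-\id_T$ swaps the two components of the positive cone in $H^{1,1}_{\R}$ so it is not realized by an automorphism. That $\alpha$ itself (rather than some root of it) generates the rank-one part is the computation carried out in \cite{Og10}.

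Your treatment of (3) and (4) is fine: the trivial automorphism group of the $E_8$ Dynkin diagram forces $g^*=\id$ on ${\rm NS}(S)$ and pins down the eight simple-root curves; then $C_3$ is pointwise fixed because it meets three other $g$-stable curves, the normal eigenvalue along $C_3$ is $\delta$ by $g^*\omega_S=\delta\omega_S$, and propagating eigenvalues along the three arms of the diagram produces exactly seven isolated fixed points on $\bigcup_i C_i\setminus C_3$, with one further isolated point off the curves to match the Lefschetz number $2+\mathrm{tr}(g^*|H^2)=2+8+0=10$.
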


We show that the K3 surface $S$ in Theorem \ref{thm51} satisfies the conditions (1) and (2) in Theorem \ref{thm3}. 

As the N\'eron-Severi lattice $({\rm NS}\, (S), (*, **))$ is negative definite by Theorem \ref{thm51} (3), it follows that $S$ has no non-constant global meromorphic functions. In particular, $S$ is non-projective. 

Consider the curve $C := C_3 \simeq \P^1$. We show that $C$ satisfies the condition (1) in Theorem \ref{thm3}. 

As the Dynkin diagram $E_8$ has no non-trivial automorphism, ${\rm Aut}\, (S)$ preserves each curve $C_i$ ($1 \le i \le 8$). As $C = C_3$ meets $C_2$, $C_4$, $C_8$ and $C_3 \cap C_4$, $C_3 \cap C_4$, $C_3 \cap C_8$ are three mutually different points on $C_3 \simeq \P^1$, it follows that ${\rm Aut}\, (S) = {\rm Ine}\, (C)$. Then $g \in {\rm Ine}\, (C)$ and $h_{{\rm top}}(g) > 0$ by Theorem \ref{thm51} (2). 

Consider the curve $D := C_8 \simeq \P^1$. We show that $D$ satisfies the condition (2) in Theorem \ref{thm3}. 

As ${\rm Aut}\, (S) = \langle g \rangle$ and $g$ preserves $D$, it follows that ${\rm Aut}\, (S) = {\rm Dec}\, (D)$. 

Now consider ${\rm Ine}\, (D)$. As $g$ preserves each curves $C_i$ and $C_i$ ($1 \le i \le 8$) generate ${\rm NS}\, (S)$, it follows that $g^* = id$ on ${\rm NS}\, (S)$. On the othe hand, as ${\rm NS}\, (S)$ is negative definite of rank $8$, the transcendental lattice $T(S)$ is of rank $14$, which is preserved by $g$. Then by Theorem \ref{thm51} (2), the characteristic polynomial of $g^*|T(S)$ is exactly $\varphi_{14}(x)$. As $\varphi_{14}(x)$ is irreducible over $\Z$ and $g$ preserves $\C \omega_S \subset T(S)_{\C}$, it follows that $g^*\omega_S = \delta \omega_S$ for some Galois conjugate $\delta$ of $\alpha$. Note that $\delta$ is {\it not} a root of unity, as $\delta$ is a Galois conjugate of $\alpha >1$, while $|\delta| =1$ by $(f^*\omega_S, f^*\overline{\omega_S}) = (\omega_S, \overline{\omega_S}) \not= 0$. Here we denote by $\overline{*}$ the complex conjugate of $*$ and $|\delta| := \sqrt{\delta \overline{\delta}} \in \R_{\ge 0}$. 

Let $P$ be the intersection point of $C = C_3$ and $D = C_8$. We denote by $x$ an affine coordinate of $D \simeq \P^1$ with $x(P) = 0$. As $g|_{C} = id_C$, $C$ and $D$ meets at $P$ transversally and $g^{*}\omega_S = \delta \omega_S$, it follows that $(g|_{D})^* x = \delta x$, i.e., $g|_D \in {\rm Aut}\, (D)$ is the multiplication automorphism $\delta \cdot$ by $\delta$. As ${\rm Aut}\, (S) = \langle g \rangle$ by Theorem \ref{thm51} (1) and $\delta$ is not a root of unity, it follows that the natural restriction homomorphism 
$$\tau : \langle g \rangle = {\rm Aut}\, (S) = {\rm Dec}\, (D) \to {\rm Aut}\, (D)\,\, ;\,\, g^n \mapsto (\delta \cdot)^n = \delta^n \cdot$$
is injective. Hence ${\rm Ine}\, (D) = {\rm Ker}\, \tau = \{id_S\}$. This completes the proof of Theorem \ref{thm3}.


\begin{thebibliography}{BHPV04}

\bibitem[BHPV04]{BHPV04} Barth, W., Hulek, K., Peters, C., Van de Ven, \textit{Compact complex surfaces}. Second enlarged edition. Springer
Verlag, Berlin-Heidelberg (2004).

\bibitem[BK09]{BK09} Bedford, E., Kim, K., \textit{Dynamics of rational surface automorphisms: linear fractional recurrences}, J. Geom. Anal. {\bf 19} (2009) 553--583. 

\bibitem[Ca01]{Ca01} Cantat, S., \textit{Dynamique du groupe d'automorphismes des surfaces K3}, Transform. Groups {\bf 6} (2001) 201--214.

\bibitem[DS05]{DS05} Dinh, T.-C., Sibony, N., \textit{Une borne sup\'erieure de l'entropie topologique d'une application rationnelle,} Ann. of Math., 
{\bf 161} (2005) 1637--1644. 

\bibitem[DS16]{DS16} Dinh, T.-C., Sibony, N., \textit{Equidistribution problems of complex dynamics in higher dimension}, arXiv:1611.03598

\bibitem[DN11]{DN11} Dinh, T.-C., Nguyen V.-A., : \textit{Comparison of dynamical degrees for semi-conjugate meromorphic maps,} Comment. Math. Helv. 
{\bf 86} (2011) 817--840.

\bibitem[DZ01]{DZ01} Dolgachev, I. V., Zhang, De-Qi, \textit{Coble rational surfaces}, Amer. J. Math. {\bf 123} (2001) 79--114. 

\bibitem[Gr87]{Gr87} M. Gromov, \textit{Entropy, homology and semialgebraic geometry}, Ast\'erisque {\bf 145}--{\bf 146} (1987) 225--240.

\bibitem[KH95]{KH95} Katok, A., Hasselblatt, B., \textit{Introduction to the modern theory of dynamical systems}, Cambridge University Press, 1995.

\bibitem[Ko63]{Ko63} Kodaira, K., \textit{On compact analytic surfaces. II}, Ann. of Math. {\bf 77} (1963), 563--626. 

\bibitem[Mc02]{Mc02} McMullen, C. T., \textit{Dynamics on K3 surfaces: Salem numbers and Siegel disks}, J. Reine Angew. Math. {\bf 545} (2002), 201--233.

\bibitem[Mc07]{Mc07} McMullen, C. T., \textit{Dynamics on blowups of the projective plane}, Publ. Math. Inst. Hautes \'Etudes Sci. {\bf 105} (2007) 49--89.

\bibitem[Ni81]{Ni81} Nikulin, V., \textit{Quotient-groups of groups of automorphisms of hyperbolic forms by subgroups generated by 2-reflections, Algebro-geometric applications}, Current Problems in Mathematics, {\bf 18} (1981) 3--114.

\bibitem[Og89]{Og89} Oguiso, K., \textit{On Jacobian fibrations on the Kummer surfaces of the product of nonisogenous elliptic curves}, J. Math. Soc. Japan {\bf 41} (1989) 651--680. 

\bibitem[Og07]{Og07} Oguiso, K., \textit{Automorphisms of hyperk\"ahler manifolds in the view of topological entropy}, 173--185, Contemp. Math., 
{\bf 422} 2007. 

\bibitem[Og08]{Og08} Oguiso, K., \textit{Mordell-Weil groups of a hyperk\"ahler manifold - a question of F. Campana}, Publ. Res. Inst. Math. Sci. {\bf 44} (2008) 495--506.

\bibitem[Og10]{Og10} Oguiso, K., \textit{The third smallest Salem number in 
automorphisms of K3 surfaces}, 331--360, Adv. Stud. Pure Math., {\bf 60}, 2010.

\bibitem[OZ96]{OZ96} Oguiso, K., Zhang, De-Qi, \textit{On the most algebraic K3  surfaces and the most extremal log Enriques surfaces}, Amer. J. Math. {\bf 118} (1996) 1277--1297. 

\bibitem[Tr15]{Tr15} Truong, T.T., : \textit{(Relative) dynamical degrees of rational maps over an algebraic closed field}, arXiv:1501.01523. 

\bibitem[Ue75]{Ue75} Ueno, K.,  \textit{Classification theory of algebraic varieties and compact complex spaces}, Lecture Notes in Mathematics, {\bf 439}, Springer-Verlag, 1975.

\bibitem[Vi83]{Vi83} Vinberg, \'E. B., \textit{The two most algebraic K3  surfaces},  Math. Ann. {\bf 265}  (1983) 1--21. 

\bibitem[Zh09]{Zh09} Zhang, D.-Q., \textit{A theorem of Tits type for compact K\"ahler manifolds}, Invent. Math. {\bf 176} (2009) 449--459.

\end{thebibliography}
\end{document}